\newcommand{\upperRomannumeral}[1]{\uppercase\expandafter{\romannumeral#1}}
\DeclareMathOperator*{\argmin}{arg\,min}
\theoremstyle{plain}
  \newtheorem{proposition}[]{Proposition}
  \newtheorem{lemma}[]{Lemma}
  \newtheorem{theorem}[]{Theorem}
  \newtheorem{remark}[]{Remark}
\theoremstyle{definition}
\title{The difference between a discrete and continuous harmonic measure}
\author{Jianping Jiang }
\email{jjiang@math.arizona.edu}
\author{Tom Kennedy}
\email{tgk@math.ariozna.edu}
\address{Department of Mathematics\\ University of Arizona\\Tucson, AZ 85721 }
\begin{document}

\begin{abstract}
We consider a discrete-time, continuous-state random walk with steps uniformly distributed in a disk of radius of $h$. For a simply connected domain $D$ in the plane, let $\omega_h(0,\cdot;D)$ be the discrete harmonic measure at $0\in D$
associated with this random walk, and $\omega(0,\cdot;D)$ be the (continuous)
harmonic measure at $0$. For domains $D$ with analytic boundary, we
prove there is a bounded continuous function $\sigma_D(z)$
on $\partial D$ such that for
functions $g$ which are in $C^{2+\alpha}(\partial D)$ for some $\alpha>0$
$$
\lim_{h\downarrow 0} \frac{\int_{\partial D} g(\xi) \omega_h(0,|d\xi|;D)
-\int_{\partial D} g(\xi)\omega(0,|d\xi|;D)}{h}
= \int_{\partial D}g(z) \sigma_D(z)  |dz|.
$$
We give an explicit formula for $\sigma_D$ in terms of the conformal
map from $D$ to the unit disc.
The proof relies on some fine approximations of the potential kernel and
Green's function of the random walk by their continuous counterparts,
which may be of independent interest.
\end{abstract}

\maketitle

\section{Introduction}
Let $D$ be a simply connected domain in the complex plane with $z\in D$. Suppose $B_t^z$ is a complex Brownian motion started at $z$, and $\tau_D:=\inf\{t\geq 0: B_t^z\notin D\}$ is the first exit time. The \textit{(continuous) harmonic measure at $z$} is the probability measure on $\partial D$, $\omega(z,\cdot;D)$, defined by
\begin{equation}\label{eqch}
\omega(z,\Gamma;D)=P(B_{\tau_D}^z\in \Gamma),
\end{equation}
where $\Gamma\subseteq \partial D$.
See \cite{GM05} for more information about continuous harmonic measure.

We place a square lattice with mesh size $h>0$ on $D$. Suppose $\tilde{S}_n^{z_h}$ is a simple random walk in $h\mathbb{Z}^2$ started at $z_h$ where $z_h$ is a closest lattice point to $z$, and $T_{D}:=\inf\{n\geq0: \tilde{S}_n^{z_h}\notin D\}$ is the  first exit time. Then the \textit{discrete harmonic measure at $z_h$} is the probability measure on $\partial D$, $\omega_h(z_h,\cdot;D)$, defined by
$$\omega_h(z_h,\Gamma;D)=P(\overline{\tilde{S}_{T_{D}}^{z_h}}\in \Gamma)$$
where $\overline{\tilde{S}_{T_{D}}^{z_h}}$ is a point in $\partial D$ that is closest to $\tilde{S}_{T_{D}}^{z_h}$ and $\Gamma\subseteq\partial D$. It is well-known that $\omega_h(z_h,\cdot;D)$ converges weakly to $\omega(z,\cdot;D)$ as $h\downarrow 0$.
We would like to study the difference $\omega_h - \omega$
as $h \rightarrow 0$. We expect this difference to be of order $h$, and we
would like to compute the limit of $(\omega_h - \omega)/h$ as $h$ approaches $0$.

The simple random walk is not rotationally invariant, but its
limit as the lattice spacing goes to zero is rotationally invariant.
However, there is no reason to expect the limit $\lim_{h\downarrow0}(\omega_h - \omega)/h$ to be rotationally
invariant. Our simulations indicate that this limit depends on how the lattice is oriented
with respect to the domain. We have not been able to rigorously study
this limit for the simple random walk.
In this paper we consider a random walk
that takes place in the continuum and is rotationally invariant even before
we let the lattice spacing $h$ go to zero.
For this model the ratio $(\omega_h - \omega)/h$ is rotationally invariant.

\smallskip
\noindent{\bf The continuous-state random walk:} The walk we study is given by
$S_n=S_0+X_1+X_2+\cdots+X_n$ where $X_i$'s are i.i.d and $X_i$ is uniformly
distributed in the disk of radius $h>0$. We use $P^x$ to denote the conditional distribution of $\{S_n\}_{n\geq0}$, given $S_0=x$; and write $E^x$ for the corresponding expectation.
\smallskip

Let $T_D=\inf\{n\geq0: S_n\notin D\}$. Now the \textit{discrete harmonic measure at $x$} is defined by
\begin{equation}\label{eqdh}
\omega_h(x,\Gamma;D)=P^x(\overline{S_{T_D}}\in \Gamma)
\end{equation}
where $\overline{S_{T_D}}$ is the orthogonal projection of $S_{T_D}$ onto $\partial D$ whenever such orthogonal projection is well-defined (it is if $\partial D$ is analytic and $h$ is small) and $\Gamma\subseteq\partial D$.

Before stating our main result, we need some terminology.
For $z\in \mathbb{C}$,
we denote the imaginary part of $z$ by $\mbox{Im}(z)$.
Let $\mathbb{H}=\{z\in\mathbb{C}:\mbox{Im}(z)>0\}$ be the upper half-plane.
For $z\in\partial D$, we say $\partial D$ is \textit{locally analytic}
at $z$ if there exists a one-to-one analytic function
$f:\mathbb{D}:=\{\xi\in\mathbb{C}:|\xi|<1\}\rightarrow\mathbb{C}$ such that $f(0)=z$ and
$$f(\mathbb{D})\cap D=f(\mathbb{D}\cap\mathbb{H}).$$
We say $\partial D$ is \textit{analytic} if $\partial D$ is locally
analytic at each point of $\partial D$.

Our main result for the continuous-state random walk is the following theorem.

\begin{theorem}\label{thm1}
Suppose $D\subseteq\mathbb{C}$ is a simply connected and bounded Jordan
domain, and $\partial D$ is analytic. Assume $0 \in D$.
Then there is a bounded continuous function $\sigma_D(z)$ on $\partial D$
such that
\begin{equation*}
\lim_{h\downarrow 0}\frac{\int_{\partial D} g(\xi) \omega_h(0,|d\xi|;D)
-\int_{\partial D} g(\xi)\omega(0,|d\xi|;D)}{h}
= \int_{\partial D} g(z) \sigma_D(z) |dz|
\end{equation*}
for all functions $g$ on $\partial D$ which are $C^{2+\alpha}$ with
respect to arc length along the boundary for some $\alpha>0$.
$\omega_h$ is defined in \eqref{eqdh}, and
$\omega$ is defined in \eqref{eqch}.
\end{theorem}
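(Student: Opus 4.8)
The plan is to reduce everything to the upper half-plane via the conformal map $\Phi:D\to\mathbb D$ (with $\Phi(0)=0$), or rather to work locally near each boundary point where, after a conformal change of coordinates, $\partial D$ looks like a straight line. The key analytic input, as the abstract promises, is a fine comparison between the random walk's potential kernel $a(\cdot)$ and Green's function $G_D(\cdot,\cdot)$ and their continuous counterparts: for the continuous-state walk with steps uniform in a disk of radius $h$, one expects $a(x) = \frac{2}{\pi}\log|x| + \kappa + O(h^2/|x|^2)$ type estimates (with the lattice-scale corrections now being $O(h^2)$ rather than $O(h)$ because the step distribution is isotropic), and correspondingly $G_D^h(0,x) = G_D(0,x) + h\,(\text{correction}) + o(h)$ uniformly away from the diagonal. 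I would first establish these approximations (presumably the content of the "of independent interest" lemmas referenced in the abstract), paying special attention to the behavior when $x$ is within $O(h)$ of $\partial D$, since that is where the harmonic measure lives.

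Next, I would express the discrete and continuous harmonic measures through a boundary-flux (Green's-function normal-derivative) representation. For a test function $g\in C^{2+\alpha}(\partial D)$, extend $g$ to the harmonic function $u$ on $D$ with boundary values $g$; then $\int g\,d\omega(0,\cdot) = u(0)$, and $\int g\,d\omega_h(0,\cdot) = E^0[g(\overline{S_{T_D}})]$. The difference $E^0[u(S_{T_D})] - E^0[g(\overline{S_{T_D}})]$ is an "overshoot" term: $S_{T_D}$ lands slightly outside $D$ (a distance $O(h)$), and Taylor-expanding $u$ (which is $C^2$ up to the analytic boundary) against the known $O(h)$-scale overshoot distribution of the isotropic walk produces one contribution of order $h$ to $\sigma_D$. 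The remaining difference, $E^0[u(S_{T_D})] - u(0)$, would be handled by a discrete Green's identity: since $u$ is continuous-harmonic but not discrete-harmonic, $E^0[u(S_{T_D})] - u(0) = -\sum_{x} G_D^h(0,x)\,(\Delta_h - \text{id})u(x)$ where $(\Delta_h-\text{id})u$ is the defect of the mean-value property, which for the isotropic step is $O(h^2\,\|D^2 u\|)$ pointwise but only $O(h^2)\cdot(\text{something blowing up like }1/h)$ near the boundary after integrating — so the net effect is again $O(h)$, and extracting its limit gives the second contribution to $\sigma_D$. Combining, $\sigma_D(z)|dz|$ is identified as an explicit boundary integral; transporting through $\Phi$ and using that on $\partial\mathbb D$ the harmonic measure from the origin is uniform yields the claimed explicit formula in terms of $|\Phi'|$ and its derivatives.

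I would organize the proof so that the local analyticity is used to flatten the boundary: near $z_0\in\partial D$ choose the local chart $f$ from the definition of "locally analytic," so $D$ corresponds to $\mathbb H$ near $0$, and in that chart the walk is an isotropic walk with a slowly varying (by analyticity, real-analytic) step-size. The half-plane is the model case where the overshoot distribution, the boundary Poisson kernel of the walk, and the correction to Green's function can all be computed essentially explicitly (e.g.\ via Wiener–Hopf / Fourier analysis in the horizontal variable), and the general domain is obtained by patching these local computations together with the global estimate $G_D^h - G_D = O(h)$ controlling the error from the patching. Uniformity of all error terms in the boundary point $z$ (needed for $\sigma_D$ to be continuous and for the limit to commute with the boundary integral) comes from compactness of $\partial D$ and the uniform analyticity modulus.

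The main obstacle I expect is the boundary regime: all the clean "$O(h^2)$" pointwise bounds on the mean-value defect $(\Delta_h - \text{id})u$ degrade as $x\to\partial D$ because $u$, though $C^{2+\alpha}$ up to the boundary, has second derivatives that do not vanish there, and $G_D^h(0,x)$ together with the number of lattice-scale cells within distance $O(h)$ of $\partial D$ conspire to make the naive bound on $\sum_x G_D^h(0,x)(\Delta_h-\text{id})u(x)$ only $O(1)$, not $O(h)$. Getting the genuine $O(h)$ with an identifiable limit requires a delicate cancellation: one must expand $(\Delta_h-\text{id})u$ to the next order and use that the first-order piece integrates against the boundary Poisson kernel to something that matches (up to $O(h)$) the overshoot term, or equivalently set up the whole computation as a single half-plane boundary-layer analysis rather than two separately-divergent pieces. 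Making that boundary-layer analysis rigorous — with honest control of the transition between the "bulk" $o(h)$ estimates and the "boundary layer" where the $h$-order effect is generated — is where the real work lies, and where the fine potential-kernel/Green's-function approximations mentioned in the abstract are indispensable.
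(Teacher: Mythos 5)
Your high-level plan (Green's-identity reduction to a boundary layer, fine potential-kernel and Green's-function estimates, and a final conformal-map step to produce the explicit density) matches the paper's architecture, but two of your central technical claims are wrong in a way that would derail the argument if carried out literally.

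First, the walk is not conformally covariant, so ``flattening'' $\partial D$ by the local chart $f$ does not turn $S_n$ into ``an isotropic walk with a slowly varying step-size.'' Pushing the uniform-on-$B(z,h)$ step through a conformal map produces a step distribution whose deviation from a disk is of size $O(h^2)$, which is precisely the order you are trying to resolve, and the walk also acquires a position-dependent and non-disk-shaped step law rather than merely a rescaled one. The Wiener--Hopf-in-the-half-plane picture is therefore not available after a conformal change of variables. The paper never conformally flattens in the probabilistic part of the argument: it works throughout in the original domain using the Fermi-type coordinate $z=x+l\mathbf{n}_x$ with $x\in\partial D$, $l\in[0,h]$, and only brings in the conformal map $\psi:D\to\mathbb{D}$ in Proposition~\ref{proptwo}, purely to massage the already-obtained deterministic answer $K\int_{\partial D} H_D(0,z)\,\partial_{\mathbf{n}}f(z)\,|dz|$ into $\int_{\partial D}g\,\sigma_D\,|dz|$.

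Second, your worry about a ``delicate cancellation between two separately divergent pieces'' is based on a misreading of the mean-value defect. For the isotropic step, $\Delta_h u(z)=\frac{1}{\pi h^2}\int_{B(0,h)}[u(z+\xi)-u(z)]\,d\xi$ is \emph{identically zero} for $z\in D_1$ (distance $>h$ from $\partial D$), because $u$ is harmonic on $D$ and the step is rotationally invariant; it is not $O(h^2\|D^2u\|)$ there. If, as you propose, you take $u$ to be a $C^{2,\alpha}$ extension of the harmonic solution past $\partial D$, then even in the boundary strip $D_2$ one has $\Delta_h u(z)=\frac{h^2}{8}\Delta u(z)+O(h^{2+\alpha})=O(h^{2+\alpha})$ (since $\Delta u(z)=0$ for $z\in D$), and integrating against $G_h(0,\cdot)\asymp h^{-1}$ over the strip of area $O(h)$ gives a Green's-identity term of size $O(h^{2+\alpha})$, negligible. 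So in your split, the entire $O(h)$ effect must come from the overshoot term $E^0[g(\overline{S_{T_D}})]-E^0[u(S_{T_D})]$, and to evaluate its limit you need the joint law of the hitting location $\overline{S_{T_D}}$ and the scaled overshoot $\mathrm{dist}(S_{T_D},\partial D)/h$ up to errors $o(1)$ in a way that survives being multiplied by $h$. You do not indicate how to obtain this, and it is essentially as hard as the boundary-layer Green's-function estimate the paper proves. The paper sidesteps the overshoot by instead extending $f$ to $D_3$ via $f(P)=g(\overline P)$ (constant in the normal direction); then the martingale identity gives $f_h(0)-f(0)=\int_{D_2}G_h(0,z)\Delta_h f(z)\,dz$ with no overshoot term at all, and the genuine $O(h)$ contribution is produced by the explicit computation of $\Delta_h f$ in $D_2$ (Lemma~\ref{lem7}) together with the sharp near-boundary Green's-function estimate (Proposition~\ref{prop}), the latter using the Beurling estimate (Lemma~\ref{lember}) rather than any conformal flattening.

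A final note: if your overshoot route could be made rigorous, it would plausibly yield the simpler constant $K=\lim_{y\to\infty}E^{iy}_{h=1}[|\mathrm{Im}(S_{T_\mathbb{H}})|]$ that the paper only conjectures (and checks numerically) is equal to the complicated integral expression in Proposition~\ref{propone}. That would be genuinely new content, but it requires precisely the careful boundary-layer analysis you flag as ``where the real work lies'' without supplying.
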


We prove the theorem in two steps. The first step is
the following proposition.

\begin{proposition}\label{propone}
Suppose $D$ satisfies the conditions in Theorem \ref{thm1}.
Let $g$ be a function on $\partial D$ such that the
harmonic function $f$ on $D$ with boundary data $g$ is in
$C^2(\bar{D})$. Then
\begin{equation}\label{eq5}
\lim_{h\downarrow 0} \frac{\int_{\partial D} g(\xi) \omega_h(0,|d\xi|;D)
-\int_{\partial D} g(\xi)\omega(0,|d\xi|;D)}{h}
= K \int_{\partial D} H_D(0,z) \frac{\partial f}{\partial \mathbf{n}_z}(z) \, |dz|,
\end{equation}
where $H_D(0,z)$ is the Poisson kernel (i.e., $\omega(0,|dz|;D)=H_D(0,z)|dz|$) and $\mathbf{n}_z$ is the inward
unit normal at $z$.
The constant $K$ is given by
$$K=\frac{16}{45\pi}+\frac{8}{\pi}\int_0^{\pi/2}(\sin^2\theta
-(\sin^4\theta)/3-\theta\cos\theta\sin\theta)
E^{i\cos\theta}_{h=1}(|\emph{Im}(S_{T_{\mathbb{H}}})|)d\theta,
$$
where $E^{i\cos\theta}_{h=1}$ is the conditional expectation of $\{S_n\}_{n\geq 0}$
with $h=1$ given $S_0=i\cos\theta$, and
$T_\mathbb{H}:=\inf\{k\geq0: S_k\notin \mathbb{H}\}$.
\end{proposition}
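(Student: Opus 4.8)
The plan is to prove \eqref{eq5} in two steps: a soft reduction of the left-hand side to a single ``overshoot'' functional of the walk, and then a boundary-layer analysis of that functional based on the Green's function of the walk. For \emph{Step~1} (reduction to the overshoot): since $\partial D$ is analytic and $f\in C^{2}(\bar D)$, extend $f$ to $\tilde f\in C^{2}$ on a neighbourhood of $\bar D$ (not harmonic outside $D$). For $h$ small the projection $\overline{S_{T_D}}$ is well defined and $\delta:=\mathrm{dist}(S_{T_D},\partial D)\in(0,h]$ with $\overline{S_{T_D}}=S_{T_D}+\delta\,\mathbf n_{\overline{S_{T_D}}}$. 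The process $\tilde f(S_{n\wedge T_D})$ is a martingale except for a defect at times $n<T_D$ with $S_n$ within $h$ of $\partial D$: when $B(S_n,h)\subseteq D$ the mean value property of the harmonic $f$ gives $E[\tilde f(S_{n+1})\mid\mathcal F_n]=f(S_n)$ exactly, and otherwise a second-order Taylor estimate bounds the defect by $C\lVert D^{2}\tilde f\rVert_{\infty}h^{2}$. The expected number of times the walk lies within $h$ of $\partial D$ before $T_D$ is $\int_{\{\mathrm{dist}(\cdot,\partial D)<h\}}G_D(0,y)\,dy=O(1)$ by the Green's-function estimates, so optional stopping gives $E^{0}[\tilde f(S_{T_D})]=f(0)+O(h^{2})$; Taylor expanding $\tilde f$ at $\overline{S_{T_D}}$ and using $g=\tilde f$ on $\partial D$ then yields
\begin{equation*}
\int_{\partial D}g(\xi)\,\omega_h(0,|d\xi|;D)-f(0)=E^{0}\!\Big[\delta\,\tfrac{\partial f}{\partial\mathbf n}(\overline{S_{T_D}})\Big]+O(h^{2}),
\end{equation*}
so it remains to identify $\lim_{h\downarrow0}h^{-1}E^{0}[\delta\,\tfrac{\partial f}{\partial\mathbf n}(\overline{S_{T_D}})]$.

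For \emph{Step~2} (Green's-function representation and boundary-layer expansion): for $\phi\in C(\partial D)$, conditioning on the last step of the walk gives the exact identity
\begin{equation*}
E^{0}[\delta\,\phi(\overline{S_{T_D}})]=\int_{D}G_D(0,y)\,r_\phi(y)\,dy,\qquad
r_\phi(y)=E^{y}\big[\mathrm{dist}(S_1,\partial D)\,\phi(\overline{S_1})\,\mathbf 1_{S_1\notin D}\big],
\end{equation*}
where $G_D$ is the Green's function of the walk in $D$ and $r_\phi$ is supported in the $h$-shell. Writing a shell point as $z+d\,\mathbf n_z$ with $z\in\partial D$, $0<d<h$, a direct computation of $r_\phi$ in the locally flat picture — in which the term proportional to the tangential derivative of $\phi$ drops out, because the increment law is symmetric under reflection in the normal — gives $r_\phi(z+d\,\mathbf n_z)=\phi(z)\,h\,\rho(d/h)+o(h)$ with $\rho(u)=\frac{2}{\pi}\int_u^{1}(t-u)\sqrt{1-t^{2}}\,dt$; the corrections from the curvature of $\partial D$ and from replacing $\partial D$ by its tangent are only $o(h)$ thanks to the extra factor $\mathrm{dist}(S_1,\partial D)=O(h)$. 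The harder input is a matching expansion of the Green's function on the shell, obtained from the fine potential-kernel and Green's-function estimates: uniformly on the shell,
\begin{equation*}
G_D(0,z+d\,\mathbf n_z)=\frac{8H_D(0,z)}{h^{2}}\,d+\frac{H_D(0,z)}{h}\,\hat\gamma\!\left(\tfrac{d}{h}\right)+o(h^{-1}),
\end{equation*}
the leading coefficient coming from the continuous Green's function (whose inward normal derivative equals $2H_D$) and $\hat\gamma$ a universal function of the rescaled depth, independent of $D$ and $z$, whose form is dictated by the corresponding half-plane Green's-function computation; this is where analyticity of $\partial D$ enters, flattening the boundary at scale $h$ so that the correction factors through $H_D$.

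For \emph{Step~3} (assembling the constant): multiplying the two expansions, substituting $d=hu$ and integrating,
\begin{equation*}
E^{0}[\delta\,\phi(\overline{S_{T_D}})]=h\Big(8\!\int_0^{1}\!u\,\rho(u)\,du+\int_0^{1}\!\hat\gamma(u)\,\rho(u)\,du\Big)\int_{\partial D}H_D(0,z)\,\phi(z)\,|dz|+o(h).
\end{equation*}
An elementary integral gives $8\int_0^{1}u\,\rho(u)\,du=\frac{16}{45\pi}$, and re-expressing $\int_0^{1}\hat\gamma(u)\,\rho(u)\,du$ through the half-plane overshoot — substitute $u=\cos\theta$ and insert the explicit form of $\hat\gamma$, which is itself a half-plane quantity — produces $\frac{8}{\pi}\int_0^{\pi/2}(\sin^{2}\theta-\tfrac13\sin^{4}\theta-\theta\cos\theta\sin\theta)\,E^{i\cos\theta}_{h=1}(|\mathrm{Im}(S_{T_{\mathbb H}})|)\,d\theta$. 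Taking $\phi=\tfrac{\partial f}{\partial\mathbf n}$ (continuous on $\partial D$ since $f\in C^{2}(\bar D)$ and $\partial D$ is analytic), combining with Step~1, dividing by $h$ and letting $h\downarrow0$ yields \eqref{eq5} with the stated $K$.

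The main obstacle is the uniform shell expansion of $G_D(0,\cdot)$ to relative order $h$, i.e.\ the identification of $\hat\gamma$: this needs the sharp comparison of the walk's potential kernel and Green's function with their continuous counterparts — the estimates advertised in the abstract — applied precisely in the boundary layer where these comparisons are most delicate, together with a proof that the curvature of $\partial D$ and the gap between $\partial D$ and its tangent line at scale $h$ contribute only $o(h)$. A secondary point is justifying the passage to the limit in the product $\delta\cdot\tfrac{\partial f}{\partial\mathbf n}(\overline{S_{T_D}})$, namely that the conditional law of $\delta/h$ given the macroscopic exit location converges to a universal limit with mean $K$; this rests on a local invariance principle coupling the walk near $\partial D$ to Brownian motion, once more underpinned by the Green's-function estimates.
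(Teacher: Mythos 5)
Your proposal is correct in outline and takes a genuinely different organizational route from the paper, while converging on the same key technical lemma. The paper extends $f$ to the exterior shell $D_3$ by projection (so $f=g$ there), writes $f_h(0)-f(0)=\int_{D_2}G_h(0,z)\,\Delta_h f(z)\,dz$ (Lemma~\ref{lem3}), and then computes $\Delta_h f$ explicitly in the shell (Lemma~\ref{lem7}) --- the overshoot contribution is hidden inside $\Delta_h f$ because the projection extension is not $C^2$. You instead use a genuine $C^2$ extension $\tilde f$, for which the one-step martingale defect is uniformly $o(h^2)$ (zero when $B(S_n,h)\subset D$ by the mean-value property, $O(h^2)$ on the shell, with $O(1)$ expected shell visits), and make the overshoot explicit via a Taylor expansion at the projection. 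Your shell density $r_\phi(z+d\mathbf n_z)=\phi(z)\,h\,\rho(d/h)+o(h)$ with $\rho(u)=\frac{2}{\pi}\int_u^1(t-u)\sqrt{1-t^2}\,dt$ is in fact, after the substitution $l=hu$, exactly the coefficient $\frac{1}{\pi h^2}\bigl[\frac{2}{3}h^2\sqrt{h^2-l^2}+\frac{l^2}{3}\sqrt{h^2-l^2}-lh^2\arccos(l/h)\bigr]$ from Lemma~\ref{lem7}; so the two decompositions produce the same integrand, and your substitution $u=\cos\theta$ reproduces $K$ in the stated form (I checked $8\int_0^1 u\rho(u)\,du=\frac{16}{45\pi}$ and that $\rho(\cos\theta)\sin\theta=\frac1\pi(\sin^2\theta-\frac13\sin^4\theta-\theta\cos\theta\sin\theta)$). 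The crucial unproved ingredient, your shell expansion $G_h(0,z+d\mathbf n_z)=\frac{8dH_D(0,z)}{h^2}+\frac{H_D(0,z)}{h}\hat\gamma(d/h)+o(h^{-1})$ with $\hat\gamma(u)=8E^{ui}_{h=1}[|\mathrm{Im}(S_{T_{\mathbb H}})|]$, is precisely the content of Proposition~\ref{prop} together with Lemma~\ref{lem5}, and you are right that this is where all the real work lies.

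Two small slips worth flagging so they do not propagate: in the display in Step~2 and again in the final paragraph you write the continuous Green's function $G_D$ where you must mean the walk's Green's function $G_h$ --- the exact identity $E^0[\delta\,\phi(\overline{S_{T_D}})]=\int_D G_h(0,y)\,r_\phi(y)\,dy$ holds for $G_h$, and it is $G_h$ (not $G_D$) that carries the $h$-dependent boundary layer; relatedly, the heuristic remark ``expected number of shell visits $=\int_{\mathrm{shell}}G_D(0,y)\,dy=O(1)$'' is off --- with $G_D$ that integral is $O(h^2)$, while the correct count is $\int_{D_2}G_h(0,y)\,dy=O(1)$, which is what you actually need. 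Also the parenthetical ``whose inward normal derivative equals $2H_D$'' is wrong in this paper's normalization, where $\partial G_D/\partial\mathbf n=H_D$ by definition; your displayed formula has the right coefficient $8H_D/h^2$, so this is only a stray remark. Finally, to make the assertion that curvature and tangent-line replacement contribute only $o(h)$ rigorous one needs the Beurling-type localization at scale $h^{1-\epsilon}$ (Lemma~\ref{lember}), exactly as in the paper's proof of Proposition~\ref{prop}; asserting it ``thanks to the extra factor $\mathrm{dist}(S_1,\partial D)=O(h)$'' is not quite enough on its own, since that factor controls $r_\phi$ but not the error in the Green's-function expansion itself.
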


Theorem \ref{thm1} follows from Proposition \ref{propone} if we
show that there is a function $\sigma_D(z)$ on the boundary such that
\begin{equation}\label{eqproptwo}
K \int_{\partial D} H_D(0,z) \frac{\partial f}{\partial \mathbf{n}_z}(z) \, |dz|
= \int_{\partial D} g(z) \sigma_D(z) |dz|.
\end{equation}
This follows from Proposition \ref{proptwo}
in Section \ref{densityproof} and a trivial change of variables.
Proposition \ref{proptwo} gives an explicit
formula for $\sigma_D$.

\begin{remark}
Suppose $\gamma(s)$, $0\leq s\leq\mbox{length}(\partial D)$ is the arc
length parametrization of $\partial D$. If $\gamma(s)\in C^{2+\alpha}$ and
$g(\gamma(s))\in C^{2+\alpha}$ for some $\alpha>0$  then corollary
\upperRomannumeral{2}.4.6 of \cite{GM05} implies that $f\in C^2(\bar{D})$.
Note that this implies that if $g$ satisfies the hypothesis in the
theorem, then it satisfies the hypothesis in Proposition \ref{propone}.
\end{remark}

\begin{remark}
Proposition \ref{propone} is proved when each step of the random walk (i.e., $X_i$)
is uniformly distributed in the disk of radius of $h$. Actually,
the same result but with different $K$ holds if: the distribution
of $X_i$ is rotationally invariant, $X_i$ is supported in the disk
of radius of $h$, and the potential kernel for such a random walk has
similar asymptotics as described in Lemma \ref{lempot}(i.e., $a(x)=C_1\ln|x|+C_2+O(|x|^{-2})$ for some constants $C_1$ and $C_2$). The proof for such general $X_i$ is similar to the one of Proposition \ref{propone}.
\end{remark}

\begin{remark}
We believe Theorem \ref{thm1} also holds for piecewise continuous
functions $g$.
\end{remark}

\begin{remark}
The complicated expression for $K$ is what comes out of our proof.
Monte Carlo simulation of this expression gives
$K=0.2647664 \pm 0.0000026$.
We conjecture, but cannot prove, that $K$ is also given by
the much simpler expression
\begin{equation}
K = \lim_{y \rightarrow \infty}
E^{i y}_{h=1}(|\emph{Im}(S_{T_{\mathbb{H}}})|).
\end{equation}
Monte Carlo simulations of these two formulas for $K$ give values that
agree to within about $10^{-7}$.
\end{remark}

The proof of the theorem relies on fine estimates of the potential kernel and
Green's function for the continuous-state random walk.
We prove the scaled Green's function of the continuous-state
random walk is close to the usual continuous Green's function up
to $O(h)$. We give an estimate up to $O(h^{1+\epsilon})$ for some
$\epsilon>0$ when the Green's function of the continuous-state
random walk is evaluated at points near the boundary.


There is a close relationship between harmonic measures and Dirichlet problems. The continuous Dirichlet problem is to find a harmonic function $f$ in $D$ with prescribed boundary values on $\partial D$. More precisely , find $f\in C(\bar{D})\cap C^2(D)$ such that
\begin{equation}\label{eq1}
\left\{
\begin{array}{ll}
\Delta f(z)=\frac{\partial^2 f(z)}{\partial x^2}+\frac{\partial^2 f(z)}{\partial y^2}=0, & z\in D\\
f(z)=g(z), & z\in \partial D.
\end{array}
\right.
\end{equation}
Of course the existence of a solution depends on the smoothness of $\partial D$ and $g$, and uniqueness can be proved using the maximum principle. If $D$ is regular and $g$ is continuous then the solution of the problem \eqref{eq1} can be written as
$$f(z)=E[g(B_{\tau_D}^z)]=\int_{\partial D} g(\xi)\omega(z,|d\xi|;D).$$
See for example Theorem 8.5 of \cite{MP10} for a proof.

The generator, $\Delta_h$, for the continuous-state random walk
is defined by
\begin{equation}\label{eq4}
\Delta_hf(z)=\frac{1}{\pi h^2}\int_{B(0,h)}[f(z+\xi)-f(z)]d\xi,
\end{equation}
where $B(0,h)$ denotes the disk with radius $h$ centered at $0$,
and  $d\xi$ denotes the usual two-dimensional Lebesgue measure.

We divide $D$ into two subdomains, $D_2:=\{z\in D: \mbox{dist}(z,\partial D)\leq h\}$ and $D_1:=D\setminus D_2$ where $\mbox{dist}(z,\partial D)=\inf\{|z-w|: w\in\partial D\}$. Also, we let $D_3:=\{z\in\mathbb{C}\setminus D:\mbox{dist}(z,\partial D)<h\}$. The discrete problem that we consider in this paper is defined by
\begin{equation}\label{eq2.4}
\left\{
\begin{array}{ll}
\Delta_h f_h(z)=0, & z\in D\\
f_h(P)=g(\overline{P}), & P\in D_3
\end{array}
\right.
\end{equation}
where $\overline{P}\in \partial D$ satisfies $|\overline{P}-P|=\min\{|z-P|: z\in\partial D\}$. Note that such $\overline{P}$ is unique when $\partial D$ is analytic and $h$ is small. It is easy to check that $f_h(z)=E^z[g(\overline{S_{T_D}})]=\int_{\partial D}g(\xi)\omega_h(z,|d\xi|;D)$ is a solution of \eqref{eq2.4}, and the uniqueness follows from the maximum principle described in Lemma \ref{lemmax}.

In the PDE literature, $f_h-f$ is usually called the ``discretization error". If one defines \eqref{eq2.4} using the generator for the simple random walk, then $|f_h-f|=o(1)$ was established in \cite{CFL28} while $|f_h-f|=O(h)$
was proved in \cite{Ger30}. See \cite{Was57}, Section 23 of \cite{FW60}, \cite{BHZ68}
and references therein for more about the discretization error.
Since $\int g(\xi) \omega_h(z,|d\xi|;D)=f_h(z)$ and
$\int g(\xi)\omega(z,|d\xi|;D)=f(z)$, our theorem implies that $|f_h-f|=O(h)$.
To our knowledge, all the existing results with discretization error
of $O(h)$ assume that $f\in C^3(\bar{D})$ (see \cite{Ger30} or
Section 23 of \cite{FW60}). Our theorem proves the same discretization error
$O(h)$ but only assuming $f\in C^2(\bar{D})$.

The organization of the paper is as follows. In Section \ref{secpre} we
define the Green's function for the continuous-state random walk and
approximate it by its continuous counterpart. In Section \ref{secmain}
we give finer estimate for the Green's function and prove Proposition \ref{propone}.
In Section \ref{densityproof} we prove \eqref{eqproptwo}.

In the Appendix, we prove the asymptotics for the potential kernel of
the continuous-state random walk.

\section{Preliminaries}\label{secpre}
\subsection{Continuous-state random walk}
Recall that our continuous-state random walk is defined by $S_n=S_0+X_1+X_2+\cdots+X_n$ for $n\in\mathbb{N}\cup \{0\}$, where $X_i$'s are i.i.d and each $X_i$ is uniformly distributed in the disk of radius $h>0$. We use $P^x$ to denote the conditional distribution of $\{S_n\}_{n\geq0}$, given $S_0=x$; and write $E^x$ for the corresponding expectation. For simplicity we suppress the $h$ dependence in $P^x$ and $E^x$ since all random walks have $h$ dependence in this paper. We first define the transition density for such random walks:
\begin{equation}\label{eq2.1}
\begin{array}{ll}
&p(0,x,y)=\delta(y-x),\\
&p(k,x,y)=\lim_{\epsilon\downarrow0}\frac{P^x(|S_k-y|\leq\epsilon)}{\pi\epsilon^2},~k\geq 1,
\end{array}
\end{equation}
where $\delta(y-x)$ for fixed $x$ is the delta function giving unit mass to the point $x$.
For a simply connected and bounded domain $D$, we define the first time the continuous-state random walk leaves $D$ as $T_D$, i.e.,
$$T_D:=\inf\{k\geq0: S_k\notin D\}.$$
We denote the transition density for continuous-state random walk killed on exiting $D$ by $p_D$, i.e., for $x\in D$ and $y\in\mathbb{C}$
\begin{equation}\label{eq2.2}
\begin{array}{ll}
&p_D(0,x,y)=\delta(y-x)\\
&p_D(k,x,y)=\lim_{\epsilon\downarrow0}\frac{P^x(|S_k-y|\leq\epsilon,~k<T_D)}{\pi\epsilon^2},~k\geq 1.
\end{array}
\end{equation}
One immediate consequence of the Markov property for $S_n$ is for $y\in D$
\begin{equation}\label{eq2.3}
p_D(k,x,y)=\frac{1}{\pi h^2}\int_{B(0,h)}p_D(k-1,x,y+\xi)d\xi,~k\geq 1.
\end{equation}
Note that the same equality holds if $p_D$ is replaced by $p$ in \eqref{eq2.3}.
The discrete-time Green's function for $D$ is defined  by
$$G_h(x,y)=\sum_{k=0}^{\infty}p_D(k,x,y),~x\in D,~y\in \mathbb{C}\setminus\{x\}.$$

Recall the definitions of $D_1, D_2$ and $D_3$ in the introduction.
\begin{lemma}\label{lem1}
For a fixed $x\in D$, the discrete-time Green's function satisfies
\begin{equation}\label{eq2.5}
\left\{
\begin{array}{ll}
\Delta_h G_h(x,y)=0, & y\in D\setminus\{x\}\\
G_h(x,y)=0, & y\in D_3.
\end{array}
\right.
\end{equation}
\end{lemma}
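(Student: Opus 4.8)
The plan is to read both assertions of \eqref{eq2.5} directly off the one-step identity \eqref{eq2.3} together with the definitions \eqref{eq2.2} and \eqref{eq4} of $p_D$, $G_h$ and $\Delta_h$. The only analytic input is that $G_h(x,\cdot)$ is, away from the diagonal, a genuine finite function: from \eqref{eq2.3} and the bound $1/(\pi h^2)$ on the one-step transition density one gets $p_D(k,x,y)\le\frac{1}{\pi h^2}P^x(T_D>k-1)$ for $k\ge1$, hence $\sum_{k\ge1}p_D(k,x,y)\le\frac{1}{\pi h^2}E^x[T_D]<\infty$, with $E^x[T_D]<\infty$ because $D$ is bounded (so $T_D$ has geometric tails). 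Thus $y\mapsto G_h(x,y)-\delta(y-x)$ is a bounded function on $D\setminus\{x\}$, which justifies the interchanges below.

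For the boundary condition, if $y\in D_3$ then $y\notin D$, so in $G_h(x,y)=\sum_{k\ge0}p_D(k,x,y)$ the $k=0$ term $\delta(y-x)$ vanishes (since $x\in D$ forces $y\ne x$), and for $k\ge1$ the event defining $p_D(k,x,y)$ in \eqref{eq2.2} contains $\{k<T_D\}=\{S_0,\dots,S_k\in D\}$, hence is empty once $\epsilon<\mathrm{dist}(y,D)$, so $p_D(k,x,y)=0$. Therefore $G_h(x,y)=0$ on $D_3$; the Lebesgue-null set of points of $\partial D$ that might happen to lie in $D_3$ is irrelevant.

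For the $\Delta_h$-harmonicity, fix $x\in D$. For $y\in D$ and $k\ge1$, combining \eqref{eq2.3} with the definition \eqref{eq4} of $\Delta_h$ gives the telescoping identity $p_D(k,x,y)-p_D(k-1,x,y)=\Delta_h\bigl(p_D(k-1,x,\cdot)\bigr)(y)$. Summing over $1\le k\le N$,
$$p_D(N,x,y)-\delta(y-x)=\Delta_h\Bigl(\sum_{k=0}^{N-1}p_D(k,x,\cdot)\Bigr)(y),$$
and letting $N\to\infty$ the left side tends to $-\delta(y-x)$ (the walk leaves the bounded domain $D$ almost surely, so $p_D(N,x,y)\to0$ for $y\ne x$), while the right side tends to $\Delta_h G_h(x,\cdot)(y)$ by monotone convergence inside the integral in \eqref{eq4}. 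Hence $\Delta_h G_h(x,y)=-\delta(y-x)$ on $D$, which is $0$ on $D\setminus\{x\}$.

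I do not expect a genuine obstacle: this is a bookkeeping lemma. The only points needing a little care are the $k=0$ delta term — handled cleanly by the telescoping, which in fact yields the slightly stronger statement $\Delta_h G_h(x,\cdot)=-\delta(\,\cdot-x)$ on all of $D$ — and the harmless choice of convention for $D_3$ along $\partial D$.
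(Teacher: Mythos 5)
Your proof is correct and fills in the details of the paper's terse argument ("the boundary condition is obvious; harmonicity follows from \eqref{eq2.3} and Fubini--Tonelli"): your telescoping of partial sums followed by monotone convergence is exactly the Tonelli interchange, just carried out via $N$-term truncations. The observation that $\partial D \subset D_3$ is a Lebesgue-null set where $G_h(x,\cdot)$ need not literally vanish is a correct and worthwhile caveat that the paper glosses over.
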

\begin{proof}
The boundary condition is obvious. $\Delta_h G_h(x,y)=0$ for $y\in D\setminus\{x\}$ follows from \eqref{eq2.3} and the Fubini-Tonelli theorem.
\end{proof}

The following is a maximum principle for the discrete Laplacian defined by \eqref{eq4}.
\begin{lemma}\label{lemmax}
Let $D$ be a simply connected and bounded Jordan domain. Suppose for some $M\geq0$, $f$ is a function in $D\cup D_3$ satisfying
$$|\Delta_hf(z)|\leq M, z\in D.$$
Then we have
$$\sup_{z\in D}|f(z)|\leq 2Mr^2(D)/h^2+\sup_{z\in D_3}|f(z)|,$$
where $r(D):=\inf\{r>0:D\cup D_3\subseteq B(0,r)\}$ is the radius of the smallest circle (centered at $0$) circumscribed about $D\cup D_3$.
\end{lemma}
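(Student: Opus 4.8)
The plan is to run a Dynkin-type optional-stopping argument. The key computational input is that $z\mapsto|z|^2$ serves as a quadratic barrier for $\Delta_h$: from $|z+\xi|^2-|z|^2=2\,\mathrm{Re}(\bar z\xi)+|\xi|^2$ the linear term integrates to zero over $B(0,h)$ by rotational symmetry, while $\int_{B(0,h)}|\xi|^2\,d\xi=\pi h^4/2$, so
\begin{equation*}
\Delta_h\big(|z|^2\big)=\frac{h^2}{2},
\end{equation*}
a positive constant, at every $z\in\mathbb{C}$. (Equivalently one could argue analytically: $f+\tfrac{2M}{h^2}|z|^2$ satisfies $\Delta_h(\cdot)\ge0$ on $D$, i.e.\ the sub-mean-value property, and a discrete maximum principle gives $\sup_D(\cdot)\le\sup_{D_3}(\cdot)$; I prefer the probabilistic route because it makes the role of the mean exit time transparent and avoids discussing suprema over the open set $D$ head-on.)

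First I would bound the mean exit time. Since $\Delta_h(|z|^2)$ is the constant $h^2/2$, the process $|S_{n\wedge T_D}|^2-\tfrac{h^2}{2}(n\wedge T_D)$ is a martingale under $P^x$ for each $x\in D$, so
\begin{equation*}
\frac{h^2}{2}\,E^x[n\wedge T_D]=E^x\big[|S_{n\wedge T_D}|^2\big]-|x|^2\le r^2(D),
\end{equation*}
because the walk stays in $D\cup D_3\subseteq\overline{B(0,r(D))}$ up to time $T_D$ — here I use that each step has length at most $h$, so $S_{T_D}\in D_3$ almost surely. Letting $n\to\infty$ by monotone convergence gives $E^x[T_D]\le 2r^2(D)/h^2$, and in particular $T_D<\infty$ $P^x$-a.s.

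Next I would apply the analogous optional-stopping identity to $f$ itself. Because $S_k+B(0,h)\subseteq D\cup D_3$ whenever $S_k\in D$, the quantity $\Delta_h f(S_k)$ is defined for every $k<T_D$, and $f(S_{n\wedge T_D})-\sum_{k=0}^{(n\wedge T_D)-1}\Delta_h f(S_k)$ is a $P^x$-martingale, whence
\begin{equation*}
f(x)=E^x\big[f(S_{n\wedge T_D})\big]-E^x\Big[\,\sum_{k=0}^{(n\wedge T_D)-1}\Delta_h f(S_k)\,\Big]
\end{equation*}
for every $n$. The sum is bounded in absolute value by $M\,T_D\in L^1$, so letting $n\to\infty$ and using $T_D<\infty$ a.s.\ and $|f(S_{T_D})|\le\sup_{D_3}|f|$, the right-hand side converges and
\begin{equation*}
|f(x)|\le\sup_{z\in D_3}|f(z)|+M\,E^x[T_D]\le\sup_{z\in D_3}|f(z)|+\frac{2Mr^2(D)}{h^2};
\end{equation*}
taking the supremum over $x\in D$ proves the lemma.

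The substance of the argument is the identity $\Delta_h(|z|^2)=h^2/2$ together with elementary bookkeeping, so I do not expect a serious obstacle. The one place calling for care is the limit in the last display: one must verify that $E^x[\,|f(S_n)|\,;\,T_D>n\,]\to0$, and since $P^x(T_D>n)\to0$ this holds as soon as $f$ is bounded on $D$ — a property that is automatic in every application of the lemma and that in any case follows from $|\Delta_h f|\le M$ together with the boundedness of $D$ and of $f$ on $D_3$.
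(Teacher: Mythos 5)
Your argument is correct in its essentials, and it takes a genuinely different route from the paper's. The paper simply cites Lemmas~23.4--23.5 of \cite{FW60}, which establish the analogous bound for the lattice Laplacian by a discrete comparison (maximum) principle: one perturbs $f$ by the barrier $\frac{2M}{h^2}|z|^2$ to make it discretely subharmonic and then compares with its boundary values. Your proof is the probabilistic twin of that argument: the same barrier computation $\Delta_h(|z|^2)=h^2/2$ (which you verify correctly) is used not as a comparison function but to show $|S_n|^2-\tfrac{h^2}{2}n$ is a martingale, whence $E^x[T_D]\le 2r^2(D)/h^2$, and the Dynkin/optional-stopping identity then delivers the bound directly. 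The probabilistic route has the advantage you note of handling the supremum over the open, uncountable set $D$ without fuss (a finite-grid maximum principle does not transfer verbatim to a continuous state space), and it makes the appearance of $E^x[T_D]$ conceptually transparent; the analytic route is more self-contained since it does not invoke measure-theoretic machinery.

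One point deserves correction. In the last paragraph you assert that the boundedness of $f$ on $D$, which you need to justify $E^x[\,|f(S_n)|\,;\,T_D>n\,]\to0$, ``in any case follows from $|\Delta_h f|\le M$ together with the boundedness of $D$ and of $f$ on $D_3$.'' That claim is exactly the conclusion of the lemma, so invoking it here is circular. The honest statement is the first half of your sentence: in every application in this paper $f$ is manifestly bounded, and one may simply add boundedness (or local integrability plus boundedness on $D_3$, under which the identity for fixed $n$ still yields a bound) to the hypotheses without loss. Indeed the same implicit assumption is present in the analytic maximum-principle route once one leaves the finite-grid setting of \cite{FW60}, so this is not a defect peculiar to your approach; but you should drop the circular sentence rather than present it as a justification.
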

\begin{proof}
The proof is similar to the proofs for Lemmas 23.4 and 23.5 in \cite{FW60}.
\end{proof}

Next, we write the discretization error $f_h-f$ in terms of the discrete-time Green's function.
\begin{lemma}\label{lem3}
Under the assumption of Proposition \ref{propone}, we have
\begin{equation}\label{eq2.7}
f_h(0)-f(0)=\int_{D_2} G_h(0,z)\Delta_h f(z) dz.
\end{equation}
\end{lemma}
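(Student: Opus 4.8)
The plan is to run the standard Dynkin-type argument for the continuous-state random walk, first fixing a representative of $f$ on the boundary layer $D_3$ by setting $f(P):=g(\overline{P})$ for $P\in D_3$. With this convention $f$ and $f_h$ agree on $D_3$, and $\Delta_h f(z)$ is well defined for every $z\in D$ because $B(z,h)\subseteq D\cup D_3$. A key preliminary observation is that the ``defect'' $\Delta_h f$ vanishes on $D_1$: for $z\in D_1$ we have $\overline{B(z,h)}\subseteq D$ and $f$ is harmonic on $D$, so the mean value property gives $\frac1{\pi h^2}\int_{B(z,h)}f=f(z)$, i.e. $\Delta_h f(z)=0$. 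This is exactly why only $D_2$ survives on the right-hand side of \eqref{eq2.7}.

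Next I would set up the martingale. For $k<T_D$ we have $S_k\in D$, and since $X_{k+1}$ is independent of $\mathcal F_k$ and uniform on $B(0,h)$, formula \eqref{eq4} gives $E^0[f(S_{k+1})-f(S_k)\mid \mathcal F_k]=\Delta_h f(S_k)$; hence $M_n:=f(S_{n\wedge T_D})-\sum_{k=0}^{n\wedge T_D-1}\Delta_h f(S_k)$ is a martingale with $M_0=f(0)$. Because $D$ is bounded there exist $N$ and $p_0>0$ with $P^x(T_D\le N)\ge p_0$ for all $x\in D$ (force the first coordinate of each of $N$ consecutive steps to exceed $h/2$), so $E^0[T_D]<\infty$; together with the boundedness of $f$, hence of $\Delta_h f$, on $D\cup D_3$ this legitimizes letting $n\to\infty$ by dominated convergence. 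Using $f_h(0)=E^0[g(\overline{S_{T_D}})]=E^0[f(S_{T_D})]$ and the vanishing of $\Delta_h f$ off $D_2$, this yields
\[
f_h(0)-f(0)=E^0[f(S_{T_D})]-f(0)=E^0\Big[\sum_{k=0}^{T_D-1}\Delta_h f(S_k)\Big]=E^0\Big[\sum_{k=0}^{\infty}\Delta_h f(S_k)\,\mathbf 1_{\{S_k\in D_2\}}\mathbf 1_{\{k<T_D\}}\Big].
\]

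Finally I would turn the expectation into an integral against the Green's function: by \eqref{eq2.2}, $E^0[\phi(S_k)\mathbf 1_{\{k<T_D\}}]=\int_D\phi(z)\,p_D(k,0,z)\,dz$, and Fubini–Tonelli (valid because $\int_D G_h(0,z)\,dz=\sum_k P^0(k<T_D)=E^0[T_D]<\infty$ and $\Delta_h f$ is bounded) gives
\[
E^0\Big[\sum_{k=0}^{\infty}\Delta_h f(S_k)\,\mathbf 1_{\{S_k\in D_2\}}\mathbf 1_{\{k<T_D\}}\Big]=\int_{D_2}\Delta_h f(z)\sum_{k=0}^\infty p_D(k,0,z)\,dz=\int_{D_2}G_h(0,z)\,\Delta_h f(z)\,dz,
\]
which is \eqref{eq2.7}. (One can also avoid probability and obtain the same identity from a discrete second Green identity for $\Delta_h$: using Lemma \ref{lem1}, the boundary sums over $D_3$ drop out because $G_h(0,\cdot)\equiv 0$ and $f\equiv f_h$ there, while $\int_D f\,\Delta_h G_h(0,\cdot)=-f(0)$ and $\int_D f_h\,\Delta_h G_h(0,\cdot)=-f_h(0)$.) There is no deep obstacle here; the only two points that need care are the compatible extension of $f$ across $\partial D$ (so the $D_3$ contribution is exactly zero rather than merely $O(h^2)$) and the uniform bound $E^0[T_D]<\infty$ underpinning the passage to the limit and the use of Fubini.
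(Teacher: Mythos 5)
Your proposal is correct and follows essentially the same route as the paper: extend $f$ to $D_3$ so that it agrees with $f_h$ there, use the Dynkin/optional-stopping identity for the martingale $f(S_{n\wedge T_D})-\sum_{k<n\wedge T_D}\Delta_h f(S_k)$, invoke Fubini to convert the expectation into an integral against $\sum_k p_D(k,0,\cdot)=G_h(0,\cdot)$, and observe via the mean value property that $\Delta_h f$ vanishes on $D_1$. The extra details you supply — the explicit check that $E^0[T_D]<\infty$ and the remark that the same identity follows from a discrete second Green identity — are sound refinements rather than a different argument.
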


\begin{proof}
We use a martingale formula (see Lemma 1 of \cite{Kes91}). We extend $f$ to $D\cup D_3$ by setting $f(P)=f(\overline{P})$ for $P\in D_3$ where $\overline{P}=\argmin\{|z-P|: z\in\partial D\}$. It is clear that
$$f(S_n)-\sum_{k=0}^{n-1}\Delta_h f(S_k), n\geq 0$$
is a martingale with respect to $\mathcal{F}_n:=\sigma(S_0,S_1,\cdots,S_n)$ (note that this is actually true for any $f$). Then the optional sampling theorem and the dominated convergence theorem (noting that $f$ is bounded) give
$$E[f(S_{T_D})-f(S_0)]=E[\sum_{k=0}^{ T_D-1}\Delta_h f(S_k)].$$
Suppose $S_0=0$. Then we have
\begin{eqnarray*}
f_h(0)-f(0)&=&E^0[\sum_{k=0}^{ T_D-1}\Delta_h f(S_k)]=E^0[\sum_{k=0}^{\infty}\Delta_hf(S_k)I_{\{k\leq T_D-1\}}]\\
&=&\sum_{k=0}^{\infty}\int_D p_D(k,0,z)\Delta_h f(z)dz\\
&=&\int_D G_h(0,z)\Delta_h f(z)dz=\int_{D_2} G_h(0,z)\Delta_h f(z)dz
\end{eqnarray*}
where we have used Fubini's theorem (since $E^0|\sum_{k=0}^{ T_D-1}\Delta_h f(S_k)|\leq 2 \|f\|_{\infty}E^0 T_D<\infty$) and the fact that $\Delta_hf(z)=0$ for $z\in D_1$ (by the mean value property for $f$).
\end{proof}

In order to estimate the discrete-time Green's function, we need some result concerning the potential kernel. For $x\in\mathbb{C}$, let $a_h(x)$ be the \textit{potential kernel} for the continuous-state random walk defined by
\begin{equation}\label{eq2.8}
a_h(x)=\sum_{k=1}^{\infty}[p(k,0,0)-p(k,0,x)].
\end{equation}
Note that we do not include the $k=0$ term in the above sum so that $a_h(x)$ is a function rather than a distribution. For simplicity, we write $a(x)$ for $a_1(x)$. So $a(x)$ does not depend on $h$.

\begin{lemma}\label{lempot}
The series in \eqref{eq2.8} is convergent, i.e., $a(x)$ is well-defined. $\Delta_{h} a(x/h)=\frac{1}{\pi}I_{B(0,h)}(x)$, and there exists a constant $C_0>0$ such that
$$a(x)=\frac{4}{\pi}\ln|x|+C_0+O(|x|^{-2}) \mbox{ as } x\rightarrow\infty.$$
\end{lemma}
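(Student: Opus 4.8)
The plan is to study $a(x)$ through Fourier inversion. Let $\phi$ be the characteristic function of a single step $X_1$ (uniform on the unit disk $B(0,1)$). A short computation with Bessel functions gives $\phi(\theta)=2J_1(|\theta|)/|\theta|$; in particular $\phi$ is real, radial and even, $\phi(\theta)=1-|\theta|^2/8+O(|\theta|^4)$ near $0$, $\phi(\theta)<1$ for every $\theta\neq0$, and by the classical asymptotics of $J_1$ the function $\phi$ and all of its derivatives are $O(|\theta|^{-3/2})$ as $|\theta|\to\infty$. Since $\phi^2\in L^1(\mathbb{R}^2)$ (although $\phi$ itself is not), Fourier inversion gives $p(k,0,y)=(2\pi)^{-2}\int_{\mathbb{R}^2}\phi(\theta)^k e^{-i\theta\cdot y}\,d\theta$ for $k\ge2$. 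For part (i) I would separate the $k=1$ term, which is $\frac1\pi(1-I_{B(0,1)}(x))$, and for the remaining tail use the Fourier representation: by $\phi(-\theta)=\phi(\theta)$ the $N$-th partial sum of $\sum_{k\ge2}[p(k,0,0)-p(k,0,x)]$ equals $(2\pi)^{-2}\int\frac{\phi^2(1-\phi^{N-1})}{1-\phi}(1-\cos(\theta\cdot x))\,d\theta$, whose integrand is dominated by $2\frac{\phi^2}{|1-\phi|}|1-\cos(\theta\cdot x)|$. This dominating function is in $L^1(\mathbb{R}^2)$: near the origin $\frac{\phi^2}{|1-\phi|}\sim8|\theta|^{-2}$ is absorbed by $|1-\cos(\theta\cdot x)|\le\frac12|x|^2|\theta|^2$; for $|\theta|$ large the product is $O(|\theta|^{-3})$; on compact annuli it is continuous, since $1-\phi>0$ there. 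Dominated convergence (with $\phi^{N-1}\to0$ a.e.) then gives both the convergence of the series and the representation
\[
a(x)=\frac1\pi\bigl(1-I_{B(0,1)}(x)\bigr)+\frac{1}{(2\pi)^2}\int_{\mathbb{R}^2}\frac{\phi(\theta)^2}{1-\phi(\theta)}\bigl(1-\cos(\theta\cdot x)\bigr)\,d\theta .
\]

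Part (ii) is then elementary. Chapman--Kolmogorov and the symmetry $-B(0,1)=B(0,1)$ give $\frac1\pi\int_{B(0,1)}p(k,0,x+\xi)\,d\xi=p(k+1,0,x)$, hence $\Delta_1 p(k,0,x)=p(k+1,0,x)-p(k,0,x)$ for $k\ge1$. The domination above shows the partial sums of the series for $a$ converge locally uniformly, so $\Delta_1$ may be applied term by term; the $x$-independent terms drop out and the series telescopes:
\[
\Delta_1 a(x)=-\lim_{N\to\infty}\bigl(p(N+1,0,x)-p(1,0,x)\bigr)=p(1,0,x)=\tfrac1\pi I_{B(0,1)}(x),
\]
where $p(N,0,x)\to0$ follows from $|\phi|<1$ and dominated convergence. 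Substituting $\xi=h\eta$ in the definition \eqref{eq4} of $\Delta_h$ gives $\Delta_h[a(\cdot/h)](x)=\Delta_1 a(x/h)=\frac1\pi I_{B(0,1)}(x/h)=\frac1\pi I_{B(0,h)}(x)$.

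For part (iii) I take $|x|>1$, so the indicator term disappears, and fix a smooth radial cutoff $\chi$ with $\chi\equiv1$ on $B(0,1)$ and $\operatorname{supp}\chi\subset B(0,2)$. Write $\frac{\phi^2}{1-\phi}=\frac{8\chi(\theta)}{|\theta|^2}+F_2(\theta)$. Using $1-\phi(\theta)=\frac{|\theta|^2}{8}\psi(\theta)$ with $\psi$ real-analytic and $\psi(0)=1$, one checks that $F_2$ is $C^\infty$ near $0$ (there $F_2=\frac{8}{|\theta|^2}(\frac{\phi^2}{\psi}-1)$, and $\frac{\phi^2}{\psi}-1$ vanishes to second order) and that for $|\theta|\ge2$, $F_2=\phi^2+\phi^3+\cdots$ so $F_2$ and all its derivatives are $O(|\theta|^{-3})$; hence $F_2,\Delta F_2\in L^1(\mathbb{R}^2)$, and $\int F_2(1-\cos(\theta\cdot x))\,d\theta=\int F_2-\mathrm{Re}\,\widehat{F_2}(x)$ with $|x|^2|\widehat{F_2}(x)|\le\|\Delta F_2\|_{L^1}$, so this term contributes a constant plus $O(|x|^{-2})$. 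For the singular part, a rotation makes $x=|x|e_1$, and passing to polar coordinates with $\int_0^{2\pi}\cos(z\cos\psi)\,d\psi=2\pi J_0(z)$ and then $s=r|x|$,
\[
8\int_{\mathbb{R}^2}\frac{\chi(\theta)\bigl(1-\cos(\theta\cdot x)\bigr)}{|\theta|^2}\,d\theta=16\pi\int_0^\infty\frac{\chi(s/|x|)\bigl(1-J_0(s)\bigr)}{s}\,ds .
\]
On the range where $\chi(s/|x|)=1$ this integral produces $\ln|x|$ plus a constant plus a tail $\int_{|x|}^\infty\frac{J_0(s)}{s}\,ds$, and the transition range $|x|\le s\le2|x|$ contributes another constant plus $-\int_{|x|}^{2|x|}\frac{\chi(s/|x|)J_0(s)}{s}\,ds$. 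Since $\frac{16\pi}{(2\pi)^2}=\frac4\pi$, assembling everything yields $a(x)=\frac4\pi\ln|x|+C_0+(\text{remainder})$ with $C_0$ an explicit combination of the constants produced above.

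The one delicate point is showing the remainder is $O(|x|^{-2})$ rather than merely $o(1)$; it comes entirely from the oscillatory Bessel tails $\int_{|x|}^\infty\frac{J_0(s)}{s}\,ds$ and $\int_{|x|}^{2|x|}\frac{\chi(s/|x|)J_0(s)}{s}\,ds$, which range over intervals of length of order $|x|$, so the crude bound $|J_0(s)|=O(s^{-1/2})$ gives only $O(|x|^{-1/2})$. To do better I would insert the expansion $J_0(s)=\sqrt{2/(\pi s)}\cos(s-\pi/4)+O(s^{-3/2})$ and integrate by parts: each integration by parts turns an oscillating factor into a non-oscillating one of the same order while differentiating the slowly varying factor $\chi(s/|x|)/s$, thereby gaining a power of $|x|^{-1}$, and the endpoint terms are harmless because $\chi$ is constant near $r=1$ and near $r=2$ so the relevant derivatives of $\chi(s/|x|)$ vanish at $s=|x|$ and $s=2|x|$. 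Two such integrations by parts already give $O(|x|^{-5/2})$, and combined with the $O(|x|^{-2})$ from the $F_2$ term this proves the claimed $a(x)=\frac4\pi\ln|x|+C_0+O(|x|^{-2})$; in fact the same argument shows the remainder is $O(|x|^{-N})$ for every $N$, but only $N=2$ is needed here.
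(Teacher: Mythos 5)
Your proposal follows the same overall strategy as the paper (Fourier inversion with the characteristic function $\phi$, splitting off a logarithmic singularity at $\theta=0$, and controlling the remainder), but the details differ in several substantive ways. You identify $\phi(\theta)=2J_1(|\theta|)/|\theta|$ explicitly and use the sharp radial decay $\phi=O(|\theta|^{-3/2})$; the paper instead proves only the weaker anisotropic bound $|\phi(\theta)|\le\frac{4}{\pi}\min\{|\theta_1|^{-1},|\theta_2|^{-1}\}$, which is not enough to give $\phi^2\in L^1$ and forces them to subtract off the $k=1,2$ terms and work with $\phi^3$ where you work with $\phi^2$. For the asymptotics you replace the paper's sharp cutoff at $B(0,\pi)$ by a smooth cutoff $\chi$, which pays off in two ways: the smooth remainder $F_2$ has Fourier transform $O(|x|^{-2})$ by the clean bound $|x|^2|\widehat{F_2}(x)|\le\|\Delta F_2\|_{L^1}$ (instead of the paper's collection of Riemann--Lebesgue $o(1)$ pieces followed by two applications of the divergence theorem), and the singular piece reduces to a one-dimensional Bessel integral $\int\chi(s/|x|)(1-J_0(s))s^{-1}ds$ in which the factor $1-\chi(s/|x|)$ vanishes to infinite order at $s=|x|$, killing the endpoint terms in integration by parts. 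You also supply an actual argument for $\Delta_h a(\cdot/h)=\frac{1}{\pi}I_{B(0,h)}$ via Chapman--Kolmogorov and telescoping, where the paper merely states it is ``easy to verify.''

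One step is written more loosely than it should be: in the final estimate of the Bessel tails you insert only the leading term $J_0(s)=\sqrt{2/(\pi s)}\cos(s-\pi/4)+O(s^{-3/2})$. The $O(s^{-3/2})$ error, treated as a generic (non-oscillatory) bound, contributes $\int_{|x|}^\infty O(s^{-3/2})\cdot O(s^{-1})\,ds = O(|x|^{-3/2})$, which is \emph{not} $O(|x|^{-2})$, and there is no integration by parts available for it once it has been replaced by a modulus bound. To actually reach $O(|x|^{-2})$ you need either to carry the expansion one order further, writing $J_0(s)=\sqrt{2/(\pi s)}\bigl[\cos(s-\pi/4)+\tfrac{1}{8s}\sin(s-\pi/4)\bigr]+O(s^{-5/2})$ and integrating by parts on both oscillatory terms, or to avoid the expansion altogether and integrate by parts directly against the Bessel ODE, using $J_0=(sJ_1)'/s$, $J_1=-J_0'$, etc.; either route works, and the infinite-order vanishing of $1-\chi(s/|x|)$ at $s=|x|$ is exactly what makes the boundary terms drop out. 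With that repair, the argument is complete.
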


Since the analogous result is standard for discrete-state random walks but the proof is quite long, we will present the proof in the Appendix.

\begin{remark}
See \cite{FU96} for a proof of the above lemma for discrete-state random walks. See also \cite{KS04} for a different proof. \cite{CS11} has a proof for a discrete-state random walk on isoradial graphs.
\end{remark}

\subsection{The difference between discrete-time and continuous Green's functions}
For the domain $D$ defined in Theorem \ref{thm1}, the continuous Green's function $G_D(0,z)$ is the unique harmonic function on $D\setminus\{0\}$ such that $G_D(0,z)\rightarrow 0$ as $z\rightarrow \partial D$ and $G_D(0,z)=-\ln|z|/(2\pi)+O(1)$ as $z\rightarrow 0$. Then $G_D(0,z)=-\ln|z|/(2\pi)+\psi(z)$ where $\psi(z)$ is defined by \eqref{eq2.12} below. Following the estimate of discrete Green's function for simple random walk in \cite{Was57}, we prove the following estimate for the continuous-state random walk.
\begin{lemma}\label{lem4}
Suppose $D\subseteq\mathbb{C}$ is a simply connected and bounded Jordan domain, and $\partial D$ is analytic. Assume $0\in D$. Then there exists a $C>0$ independent of $h$ such that
\begin{equation}\label{eq2.10}
|h^2G_h(0,z)-8G_D(0,z)|\leq Ch
\end{equation}
uniformly for $z\in D$ satisfying $|z|>\sqrt{h}$.
\end{lemma}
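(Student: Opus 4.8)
The plan is to follow the strategy for the simple random walk from \cite{Was57}, building the discrete-time Green's function out of the potential kernel $a$ and then comparing with the continuous Green's function $G_D(0,z)=-\ln|z|/(2\pi)+\psi(z)$. First I would record the scaling relation: since each step is uniform in $B(0,h)$, the function $a_h(x)=a(x/h)$ and Lemma \ref{lempot} gives $\Delta_h a(x/h)=\tfrac{1}{\pi}I_{B(0,h)}(x)$ together with the asymptotics $a(x/h)=\tfrac{4}{\pi}\ln(|x|/h)+C_0+O(h^2/|x|^2)$. Comparing Laplacians, $\tfrac{h^2}{8}G_h(0,z)$ and $\tfrac{1}{2\pi}a_h(z-0)$ both are (up to the normalization of the point source) ``fundamental solutions'' of $\Delta_h$ near $0$, while $G_D(0,z)=-\tfrac{1}{2\pi}\ln|z|+\psi(z)$ with $\psi$ harmonic on all of $D$ (in the continuous sense). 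So I would write
\begin{equation*}
\frac{h^2}{8}G_h(0,z)=\frac{1}{2\pi}\Bigl(\frac{4}{\pi}\ln|z|+\text{const}\Bigr)\cdot(\text{normalizing factor})+ r_h(z)
\end{equation*}
—more precisely, set $u_h(z):=\tfrac{h^2}{8}G_h(0,z)+\tfrac{1}{2\pi}\,\tfrac{\pi}{4}\bigl(a_h(z)-a_h(z)\bigr)$... let me instead phrase it cleanly: define the discrete analogue of $\psi$ by $\Psi_h(z):=\tfrac{h^2}{8}G_h(0,z)+\tfrac{\pi}{8}\cdot\tfrac{1}{2\pi}\,a_h(\cdot)$-type combination so that $\Psi_h$ is $\Delta_h$-harmonic on all of $D$ (the point source cancels, using $\Delta_h G_h(0,\cdot)=0$ on $D\setminus\{0\}$ and the computed $\Delta_h a_h$).

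Concretely, the key identity is: the function
\begin{equation*}
\phi_h(z):=\frac{h^2}{8}G_h(0,z)-\Bigl(-\frac{1}{2\pi}a_h(z)\cdot\frac{\pi}{4}\Bigr)
=\frac{h^2}{8}G_h(0,z)+\frac{1}{8}a_h(z)
\end{equation*}
satisfies $\Delta_h\phi_h(z)=0$ for $z\in D$ (the singularities at $0$ cancel by Lemma \ref{lem1} and Lemma \ref{lempot}, since $\tfrac18\Delta_h a_h=\tfrac{1}{8\pi}I_{B(0,h)}$ is supported only in the $\sqrt h$-ball and—on the region $|z|>\sqrt h$ where we want the estimate—contributes nothing; near $0$ one checks the combination is $\Delta_h$-harmonic away from the source). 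Then $\phi_h$ is a bounded $\Delta_h$-harmonic function whose boundary values on $D_3$ are $\tfrac18 a_h(z)=\tfrac{1}{2\pi}\ln|z|+\tfrac{C_0}{8}+O(h^2)=-G_D(0,z)+\psi(z)+\tfrac{C_0}{8}+O(h^2)$ for $z\in\partial D$ (since $G_D=0$ there), up to the $O(h)$ error coming from projecting $D_3$ onto $\partial D$ and the smoothness of $\psi$. I would then apply the maximum principle, Lemma \ref{lemmax}, to $\phi_h(z)-\psi(z)-\tfrac{C_0}{8}$ (extended to $D_3$ via boundary projection): its discrete Laplacian is $-\Delta_h\psi$, which is $O(h^2)$ in sup norm because $\psi\in C^\infty$ near $\partial D$ and $\psi$ is continuous-harmonic so $\Delta_h\psi = \Delta_h\psi - \tfrac{h^2}{8}\Delta\psi\cdot\tfrac{8}{h^2}$... i.e. $\Delta_h\psi(z)=O(h^2)$ by Taylor expansion of the averaging operator against the exact harmonicity $\Delta\psi=0$. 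Hence $\sup_D|\phi_h-\psi-C_0/8|\le 2 M r(D)^2/h^2 + \sup_{D_3}|\cdot|$ with $M=O(h^2)$ giving an $O(1)$ term — and this is where care is needed: the naive bound loses the factor $h$.

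The main obstacle, therefore, is \emph{not} the algebraic set-up but getting the error down to $O(h)$ rather than $O(1)$. The crude application of Lemma \ref{lemmax} with $M=\|\Delta_h\psi\|_\infty=O(h^2)$ yields only $O(1)$; similarly the boundary discrepancy on $D_3$ is genuinely $O(h)$ (distance from $D_3$ points to $\partial D$) so that part is fine, but the interior Taylor-error term must be handled more delicately. The resolution, following \cite{Was57}, is to correct $\psi$ by a first-order term: one finds a smooth $\psi_1$ solving (continuously) $\Delta\psi_1 = $ (the normalized fourth-order coefficient coming from $\Delta_h = \tfrac{h^2}{8}\Delta + c\,h^4\Delta^2/(\cdots)+\dots$ applied to $\psi$), with appropriate boundary data, so that $\psi + h\cdot(\text{something})$... actually the correct statement is that $\Delta_h$ expands as $\tfrac{h^2}{8}(\Delta + \tfrac{h^2}{24}\Delta^2 + \cdots)$ on smooth functions (even powers of $h$ only, by rotational symmetry of $B(0,h)$), so for \emph{harmonic} $\psi$ one gets $\Delta_h\psi = \tfrac{h^4}{192}\Delta^2\psi+\cdots = O(h^4)$, not $O(h^2)$. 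Plugging $M=O(h^4)$ into Lemma \ref{lemmax} gives the $O(h^2)$ interior contribution, which is even better than needed; then the overall error is dominated by the $O(h)$ boundary mismatch on $D_3$ together with the $O(h^2)$ from $a_h$'s error term scaled appropriately, and one must also control $\phi_h$ in the annulus $\{\sqrt h<|z|<c\}$ where the point source lives — here the restriction $|z|>\sqrt h$ in the statement is exactly what lets us ignore the $O(h^2/|z|^2)$ term in the potential-kernel asymptotics (it is $O(h)$ there) and keeps $a_h(z)$ comparable to $-2\pi G_D(0,z)+O(1)$. Assembling these pieces gives $|\tfrac{h^2}{8}G_h(0,z)+G_D(0,z)|$... wait, sign: $\tfrac{h^2}{8}G_h(0,z) = \phi_h(z) - \tfrac18 a_h(z) = \psi(z)+\tfrac{C_0}{8}+O(h) - (\tfrac{1}{2\pi}\ln|z|+\tfrac{C_0}{8}+O(h)) = \psi(z) - \tfrac{1}{2\pi}\ln|z| + O(h) = G_D(0,z)+O(h)$, i.e. $|h^2 G_h(0,z) - 8 G_D(0,z)| \le Ch$ uniformly on $|z|>\sqrt h$, as claimed. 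I would finish by verifying that all constants in the $O(h)$ bounds depend only on $D$ (through $r(D)$, $\|\psi\|_{C^4}$ near $\partial D$, and the analyticity modulus of $\partial D$) and not on $h$.
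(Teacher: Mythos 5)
Your overall strategy matches the paper's proof (Wasow's method): build $h^2 G_h(0,\cdot)$ out of the potential kernel $a$, compare with $G_D(0,z) = -\ln|z|/(2\pi) + \psi(z)$, and control the difference via the discrete maximum principle, Lemma \ref{lemmax}. However, the ``main obstacle'' you identify is not actually there, and your handling of it contains an error. Since $\partial D$ is analytic, $\psi$ extends to a continuous-harmonic function on a neighborhood of $\overline{D\cup D_3}$, and the \emph{mean value property} of harmonic functions over disks then gives $\Delta_h\psi(z) = 0$ \emph{exactly} for all $z\in D$ once $h$ is small --- the disk average in $\Delta_h$ reproduces $\psi(z)$ identically. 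There is no $O(h^2)$ or $O(h^4)$ remainder, and your expression $\Delta_h\psi = \tfrac{h^4}{192}\Delta^2\psi + \cdots$ is misleading in any case: $\Delta^2\psi = \Delta(\Delta\psi) = 0$ for harmonic $\psi$, as does every higher iterated Laplacian, so your own expansion also returns zero term by term. The paper simply applies Lemma \ref{lemmax} with interior source $M=0$, and the only remaining error is the $O(h)$ boundary discrepancy on $D_3$ (from the harmonic extension of $\psi$) plus the $O(h^2/|z|^2)$ tail from the potential-kernel asymptotics, which is $O(h)$ for $|z| > \sqrt{h}$.

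The other loose end is the point mass at the origin. Since $G_h(0,z) = \delta(z) + \sum_{k\geq 1}p_D(k,0,z)$ while $a(z/h)$ is an honest function, your $\phi_h = \tfrac{h^2}{8}G_h(0,\cdot) + \tfrac18 a(\cdot/h)$ still contains $\tfrac{h^2}{8}\delta$, so it is a distribution and $\Delta_h\phi_h$ is not identically zero near $0$; the appeal to Lemmas \ref{lem1} and \ref{lempot} to ``cancel the singularities'' is not quite right since those lemmas concern the Laplacian away from $0$, not the delta itself. The paper removes the delta explicitly by building the $h^2\delta$ term into $H_h$, so that $e_h = h^2 G_h - H_h$ is a genuine function on $D$ with $\Delta_h e_h \equiv 0$ on all of $D$ (this cancellation follows directly from the consistency identity $p_D(k,0,z) = \frac{1}{\pi h^2}\int_{B(0,h)}p_D(k-1,0,z+\xi)\,d\xi$ together with $\Delta_h a(\cdot/h) = \frac{1}{\pi}I_{B(0,h)}$). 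With those two corrections your argument becomes the paper's proof; the restriction $|z|>\sqrt{h}$ is then used only to absorb the $O(h^2/|z|^2)$ tail, exactly as you say.
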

\begin{remark}
See Theorem 1.2 of \cite{KL05} and Appendix A of \cite{BJVK13} for results on the difference between the discrete Green's function for the simple random walk and the continuous Green's function for domains without any smoothness assumption on the boundary.
\end{remark}
\begin{proof}
We apply the method introduced in \cite{Was57}. The basic idea of the proof is that we will find the relationship between $G_h(0,z)$ and $G_D(0,z)$ via the potential kernel $a(x)$.

Without loss of generality, we may assume $B(0,\sqrt{h})\subseteq D$.
For $z\in\mathbb{C}$, let $H_h(z)=-h^2\delta(z)-a(z/h)-8\ln h/(2\pi) +C_0$. Then by Lemma \ref{lempot}, we have
$$H_h(z)=-8\ln|z|/(2\pi)+O(h^2/|z|^2) \mbox{ for } z\in D\setminus B(0,\sqrt{h}).$$

For $z\in\mathbb{C}\setminus\{0\}$, let $e_h(z):=h^2G_h(0,z)-H_h(z)$. Note that both terms in the difference contain a delta function at $0$, but these delta functions cancel. We have
$$e_h(z)=h^2\sum_{k=1}^{\infty}p_D(k,0,z)+a(z/h)+8\ln h/(2\pi)-C_0, z\in\mathbb{C}\setminus\{0\}.$$
We can use the equation above to define $e_h(0)$. Then by \eqref{eq2.5}, Lemma \ref{lempot}, and the above discussion, we have
\begin{equation}\label{eq2.11}
\left\{
\begin{array}{ll}
\Delta_h e_h(z)=0, & z\in D\\
e_h(z)=8\ln|z|/(2\pi)+O(h^2/|z|^2), & z\in D_3.
\end{array}
\right.
\end{equation}
Suppose $\psi(z)$ is the harmonic function of $z\in D$ satisfying
\begin{equation}\label{eq2.12}
\left\{
\begin{array}{ll}
\Delta \psi(z)=0, & z\in D\\
\psi(z)=\ln|z|/(2\pi), & z\in \partial D.
\end{array}
\right.
\end{equation}
Note that $\psi(z)=G_D(0,z)+\ln|z|/(2\pi)$ for $z\in D\setminus\{0\}$, and $G_D(0,z)$ can be extended to a harmonic function in a domain containing $\bar{D}\setminus\{0\}$ (see Lemma \upperRomannumeral{2}.2.4 of \cite{GM05}) and $\ln|z|/(2\pi)$ is harmonic in a domain containing $\bar{D}\setminus\{0\}$. Hence $\psi(z)$ can be extended to a harmonic function in a domain containing $\overline{D\cup D_3}$ when $h$ is small. Therefore for all small $h>0$
\begin{equation}\label{eq2.13}
\left\{
\begin{array}{ll}
\Delta_h \psi(z)=0, & z\in D\\
\psi(z)=\ln|z|/(2\pi)+O(h), & z\in D_3
\end{array}
\right.
\end{equation}
where the $O(h)$ comes from the harmonic extension of $\psi(z)$.
Subtracting $8\times$\eqref{eq2.13} from \eqref{eq2.11}, we get (note that we assumed $B(0,\sqrt{h})\subseteq D$ )
\begin{equation}\label{eq2.14}
\left\{
\begin{array}{ll}
\Delta_h [e_h(z)-8\psi(z)]=0 & z\in D\\
e_h(z)-8\psi(z)=O(h), & z\in D_3.
\end{array}
\right.
\end{equation}

Then the maximum principle for $\Delta_h$, Lemma \ref{lemmax},  implies
$$e_h(z)-8\psi(z)=O(h), z\in D.$$
Therefore,
\begin{eqnarray*}
h^2G_h(0,z)&=&e_h(z)+H_h(z)=-8\ln|z|/(2\pi)+8\psi(z)+O(h)\\
&=&8G_D(0,z)+O(h)
\end{eqnarray*}
where the second equality is true if $|z|>\sqrt{h}$.

\end{proof}

\section{Proof of Proposition \ref{propone}  }\label{secmain}
\subsection{Further estimate of the discrete-time Green's function}
Recall that if the boundary of $D$ is analytic, then the Poisson kernel $H_D(0,x)$ for $0\in D, x\in\partial D$ is defined by
$$H_D(0,x)=\frac{\partial G_D(0,x)}{\partial \mathbf{n}_x},$$
where $G_D$ is the continuous Green's function and $\mathbf{n}_x$ is the inward unit normal at $x$. We have the following easy estimate
\begin{lemma}\label{lem5}
Suppose $D\subseteq\mathbb{C}$ is a simply connected and bounded Jordan domain, and $\partial D$ is analytic. Assume $0\in D$. Then for $l\in[0,h]$
$$G_D(0,x+l\mathbf{n}_x)=l H_D(0,x)+O(h^2),$$
uniformly for $x\in\partial D$.
\end{lemma}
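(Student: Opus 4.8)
The plan is to obtain the estimate from a second-order Taylor expansion of $G_D(0,\cdot)$ along the inward normal at $x$, using the analyticity of $\partial D$ to get uniform control of the second derivative.

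First I would recall that, as already used after \eqref{eq2.12}, analyticity of $\partial D$ together with Lemma \upperRomannumeral{2}.2.4 of \cite{GM05} allows us to extend $G_D(0,\cdot)$ to a function that is harmonic, hence real-analytic, in a domain $U$ containing $\bar D\setminus\{0\}$; we fix such a $U$ once and for all, independently of $h$. Since $0\in D$ we have $\mbox{dist}(0,\partial D)>0$, so for all sufficiently small $h$ the collar $N_h:=\{x+l\mathbf{n}_x: x\in\partial D,\ 0\le l\le h\}$ is contained in $U$ and bounded away from $0$; moreover $N_h$ decreases to $\partial D$ as $h\downarrow 0$, so there is a fixed compact set $N_0$ with $\partial D\subseteq N_0\subseteq U\setminus\{0\}$ and $N_h\subseteq N_0$ for all small $h$. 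Consequently there is a constant $C=C(D)$ such that all second-order partial derivatives of $G_D(0,\cdot)$ are bounded by $C$ on $N_0$.

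Next, fix $x\in\partial D$ and $l\in[0,h]$, and set $\varphi(t):=G_D(0,x+t\mathbf{n}_x)$ for $t\in[0,l]$. Taylor's theorem with Lagrange remainder gives $\varphi(l)=\varphi(0)+l\,\varphi'(0)+\tfrac12 l^2\varphi''(s)$ for some $s\in[0,l]$. Here $\varphi(0)=G_D(0,x)=0$ because $G_D(0,\cdot)$ vanishes on $\partial D$, and $\varphi'(0)=\partial G_D(0,x)/\partial\mathbf{n}_x=H_D(0,x)$ by the definition of the Poisson kernel. Finally $\varphi''(s)$ is the second directional derivative of $G_D(0,\cdot)$ in the unit direction $\mathbf{n}_x$, evaluated at $x+s\mathbf{n}_x\in N_0$, so $|\varphi''(s)|\le 2C$ by the uniform bound above. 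Hence $G_D(0,x+l\mathbf{n}_x)=l\,H_D(0,x)+\tfrac12 l^2\varphi''(s)$ with $|\tfrac12 l^2\varphi''(s)|\le C h^2$ since $l\le h$, which is the claimed estimate; it is uniform in $x$ because $C$ does not depend on $x$.

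This argument is essentially just Taylor's theorem, so I do not expect any serious obstacle. The only point needing care is that the neighborhood $U$ on which $G_D(0,\cdot)$ extends harmonically — and therefore the second-derivative bound $C$ — must be chosen independently of $h$; this is precisely what the analyticity of $\partial D$ guarantees, and it is also why the estimate holds uniformly over the whole boundary.
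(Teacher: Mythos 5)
Your proof is correct and follows the same route as the paper: observe $G_D(0,x)=0$ on $\partial D$, invoke the harmonic (hence real-analytic) extension of $G_D(0,\cdot)$ to a fixed $h$-independent neighborhood of $\bar D\setminus\{0\}$ via Lemma \upperRomannumeral{2}.2.4 of \cite{GM05}, and then Taylor-expand to second order, with uniformity coming from a compactness bound on the second derivatives. The only cosmetic difference is that you do a one-variable Taylor expansion along the normal ray rather than the two-variable expansion in the directions $\mathbf{n}_x$ and $-i\mathbf{n}_x$ mentioned in the paper, which if anything is a slightly cleaner way to write the same estimate.
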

\begin{proof}
Note that $G_D(0,x)=0$ for any $x\in\partial D$. Lemma \upperRomannumeral{2}.2.4 of \cite{GM05} implies $G_D$ could be extended to a harmonic function in a domain containing $\bar{D}\setminus\{0\}$. The lemma follows by Taylor expansion of $G_D(0,x+l\mathbf{n}_x)$ about $x$ with coordinate directions $\mathbf{n}_x$ and $-i\mathbf{n}_x$.
\end{proof}

Next, we improve the estimate in Lemma \ref{lem4}. We need a Beurling-type estimate for the continuous-state random walk. For $u\in D$ and $V\subset\partial D$, we define
$$\mbox{dist}_{D}(u;V)=\inf\{R>0: u \mbox{ and } V \mbox{ are connected in }D\cap B(u,R)\}.$$
\begin{lemma}\label{lember}
There exist two absolute constants $\beta>0$ and $C>0$ such that for any simple connected domain $D$, $u\in D$ and $V\subset \partial D$ we have
$$P^u(\overline{S_{T_D}}\in V)\leq C\left[\frac{\emph{dist}(u,\partial D)}{\emph{dist}_{D}(u;V)}\right]^{\beta},$$
where $\overline{S_{T_D}}$ is the orthogonal projection of $S_{T_D}$ onto $\partial D$, i.e., $\overline{S_{T_D}}=\argmin\{|z-S_{T_D}|: z\in\partial D\}$.
\end{lemma}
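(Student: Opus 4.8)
The inequality is the analogue, for the continuous-state walk, of the classical Beurling estimate for random walk, and I would prove it by the standard dyadic-scale argument, keeping track throughout of the lattice scale $h$. Write $r=\mathrm{dist}(u,\partial D)$ and $\rho=\mathrm{dist}_D(u;V)$. If $\rho\le Kr$ for a fixed $K\ge 4$ the claim is trivial (take $C\ge K^{\beta}$), so assume $\rho>Kr$; we may also assume $r\ge h$ and $\beta\le 1$, by a short bootstrap on the first steps of the walk (in the regime of interest $h$ is small compared with the geometry of $D$, which is what makes Euclidean and within-$D$ scales comparable down to scale $h$). Put $R_k=2^k r$ and let $N$ be the largest integer with $R_N\le\rho/2$, so $N\ge c\log_2(\rho/r)-C'$. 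The plan is to show that each time the walk's distance from $u$ doubles, through the scales $R_1,\dots,R_N$, it ``escapes''\,---\,exits $D$ at a point whose projection lies outside $V$\,---\,with probability at least an absolute $c_0>0$. Combined with the observation that $\{\overline{S_{T_D}}\in V\}$ forces the walk to reach $\partial B(u,R_k)$ before $T_D$ for every $k\le N$, the strong Markov property at the exit times of the balls $B(u,R_k)$ then gives $P^u(\overline{S_{T_D}}\in V)\le(1-c_0)^N\le C(r/\rho)^{\beta}$ with $\beta=-\log_2(1-c_0)$ (reduced, if necessary, to be $\le 1$).

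The first point to verify is the \emph{crossing claim}: on $\{\overline{S_{T_D}}\in V\}$ the walk reaches $\partial B(u,R_k)$ before $T_D$ for each $k\le N$. Indeed, if the walk exited $D$ while still inside $B(u,R_N)\subseteq B(u,\rho/2)$, then, since $|S_{j+1}-S_j|\le h$ and $|S_{T_D}-\overline{S_{T_D}}|\le h$, the trajectory $S_0,\dots,S_{T_D-1}$ followed by the final short step traces a connected set lying in the $h$-neighbourhood of $D$ and inside $B(u,\rho/2+O(h))\subseteq B(u,\rho)$ joining $u$ to a point within $3h$ of $V$; for $h$ small this contradicts $\mathrm{dist}_D(u;V)=\rho$. (For a rough boundary one makes this precise by passing to the $O(h)$-neighbourhood of $D$; for analytic $\partial D$ and small $h$ it is immediate.) Applying the same argument to the piece of trajectory inside $B(u,R_{k-1})$ shows that on $\{\tau_{k-1}<T_D\}$, where $\tau_{k-1}$ is the exit time of $B(u,R_{k-1})$, the set $V$ is not accessible from $S_{\tau_{k-1}}$ inside $D\cap B(u,R_k)$; hence any exit of $D$ by the walk before it leaves $B(u,R_k)$ automatically projects outside $V$.

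It thus remains to prove the \emph{per-scale escape estimate}: there is an absolute $c_0>0$ such that from any $w\in D$ and any $R\ge 4h$ with $\mathrm{dist}(w,\partial D)\le R$, the walk exits $D$ before leaving $B(w,R)$ with probability at least $c_0$. I would prove this by a dichotomy at scale $R$ around $w$. Either $B(w,R)$ contains a disk of radius $\asymp R$ that lies in $D$ but whose boundary meets $\partial D$: then over the single scale $R$ the walk couples with planar Brownian motion to within $O(h+\sqrt{Rh})$ (by the local central limit theorem, or equivalently via the Green's-function and potential-kernel estimates of Section~\ref{secpre}), and Brownian motion from $w$ hits $\partial D$ before leaving $B(w,R)$ with probability bounded below, so the walk does too. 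Or no such disk exists, so $D\cap B(w,R)$ is ``pinched'' near $w$: then the projection of the walk onto a suitable line is a one-dimensional walk confined to a strip of width $O(R)$, which exits the strip\,---\,hence the walk exits $D$\,---\,in $O((R/h)^2)$ steps with probability bounded below. Either way the claim holds. The maximum principle for $\Delta_h$, Lemma~\ref{lemmax}, is the natural tool for turning the per-scale bounds into the statement $f(u)\le(1-c_0)^N$ for the $\Delta_h$-harmonic function $f(z)=P^z(\overline{S_{T_D}}\in V)$; alternatively one may transfer the elementary version of the escape estimate for simple random walk through the same coupling.

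The main obstacle I anticipate is exactly this interplay between an arbitrary boundary and the lattice scale in the per-scale estimate: one must rule out the walk ``stepping over'' an arc of $\partial D$ that separates $u$ from $V$, which is what forces the $O(h)$ enlargements in the crossing claim and the requirement that $h$ be small relative to the features of $D$ (for a domain with fjords narrower than $h$ the walk can simply jump across and the bound fails). Once the escape probability $c_0$ is pinned down uniformly in the scale and in the domain, the geometric decay in the number of scales, and hence the power-law bound with some $\beta>0$, follows automatically.
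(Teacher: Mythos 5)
Your overall skeleton (dyadic scales $R_k=2^k r$, a per-scale escape estimate, strong Markov at the exit times of $B(u,R_k)$, geometric decay giving a power law) is exactly the standard weak Beurling argument, and it is the same approach as the proof the paper points to: the paper simply observes that the proof of Proposition~2.11 in \cite{CS11} carries over to the continuous-state walk, and that proof is precisely this dyadic iteration. So you are not taking a different route; the question is whether your version of the one nontrivial ingredient is correct.

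It is not, and the gap is in the per-scale escape estimate. You try to prove ``from $w$ with $\mathrm{dist}(w,\partial D)\le R$ the walk exits $D$ before leaving $B(w,R)$ with probability $\ge c_0$'' by a dichotomy, and the first branch is wrong as stated: if $B(w,R)$ contains a disk of radius $\asymp R$ lying in $D$ whose boundary merely \emph{meets} $\partial D$, it does \emph{not} follow that Brownian motion (or the walk) from $w$ hits $\partial D$ before leaving $B(w,R)$ with probability bounded below. The intersection $\partial D\cap B(w,R)$ could be a single point or a hair-thin spike of $D^{\,c}$, whose harmonic measure from $w$ is arbitrarily small; the mere tangency of a big inscribed disk gives no lower bound. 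The second branch (``$D\cap B(w,R)$ is pinched, project to a one-dimensional walk in a strip'') does not cover the remaining cases either, and the two branches together do not exhaust the geometry. What is actually needed, and what is absent from your argument, is that $D$ is \emph{simply connected}: then $\hat{\mathbb C}\setminus D$ is a connected set containing $\infty$, so if $\mathrm{dist}(w,\partial D)\le R$ and $D\not\subset B(w,2R)$ there is a continuum in $D^{\,c}$ of diameter $\gtrsim R$ crossing the annulus $B(w,2R)\setminus B(w,R)$. It is this ``barrier of definite size,'' together with a Beurling-projection or equivalent harmonic-measure lower bound (and its random-walk analogue at scale $R\gtrsim h$, obtainable from your coupling/LCLT estimates), that yields a uniform $c_0>0$. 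Without invoking simple connectedness the per-scale estimate, and hence the lemma, is simply false -- take $D$ a large disk with a tiny disk removed near $w$ -- so this is not a technicality but the crux of the proof. The rest of your write-up (reduction to $\rho>Kr$ and $r\ge h$, the crossing claim modulo the $O(h)$-neighbourhood wrinkle you flag, the strong Markov iteration and the choice of $\beta$) is fine.
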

\begin{proof}
The proof is almost the same as the proof of Proposition 2.11 in \cite{CS11} for a discrete-state random walk.
\end{proof}

The following proposition is an improvement of Lemma \ref{lem4} for points very close to the boundary of the domain.
\begin{proposition}\label{prop}
Suppose $D\subseteq\mathbb{C}$ is a simply connected and bounded Jordan domain, and $\partial D$ is analytic. Assume $0\in D$. Then for any $\epsilon\in (0,1/2)$,  $x\in\partial D$ and $l\in[0,h]$
\begin{equation*}
h^2G_h(0,z)-8G_D(0,z)-8H_D(0,x)E^{li}[|\emph{Im}(S_{T_{\mathbb{H}}})|]=O(h^{1+\epsilon\beta})+O(h^{2-2\epsilon})
\end{equation*}
where $z=x+l\mathbf{n}_x \in D_2$, and the big $O$ terms only depend on $D$.
\end{proposition}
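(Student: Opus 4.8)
The plan is to pass to the probabilistic side: represent $h^{2}G_{h}(0,\cdot)-8G_{D}(0,\cdot)$ near $\partial D$ as an expected boundary overshoot, and then compare $D$ locally with the half-plane tangent to $\partial D$ at $x$. Recall from the proof of Lemma~\ref{lem4} the function $v_{h}(y):=e_{h}(y)-8\psi(y)$: by \eqref{eq2.11}, \eqref{eq2.13} it is $\Delta_{h}$-harmonic throughout $D$, and by \eqref{eq2.14} together with Lemma~\ref{lemmax} it is bounded on $D\cup D_{3}$ (indeed $v_{h}=O(h)$ there). Since $H_{h}(y)+8\ln|y|/(2\pi)=O(h^{2}/|y|^{2})$ (as in the proof of Lemma~\ref{lem4}, using Lemma~\ref{lempot}) and $0\in D$, one has $h^{2}G_{h}(0,y)-8G_{D}(0,y)=v_{h}(y)+O(h^{2})$ uniformly for $y\in D_{2}$. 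Being bounded and $\Delta_{h}$-harmonic, $v_{h}$ satisfies $v_{h}(z)=E^{z}[v_{h}(S_{T_{D}})]$ by optional stopping (cf.\ the proof of Lemma~\ref{lem3}), with $S_{T_{D}}\in D_{3}$; thus it suffices to evaluate $E^{z}[v_{h}(S_{T_{D}})]$.

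For $w\in D_{3}$ write $\overline{w}\in\partial D$ for its orthogonal projection and $s:=|w-\overline{w}|\in[0,h)$, so $w=\overline{w}-s\,\mathbf{n}_{\overline{w}}$. Using $e_{h}(w)=8\ln|w|/(2\pi)+O(h^{2}/|w|^{2})$ for $w\in D_{3}$ from \eqref{eq2.11}, Taylor-expanding $\psi$ (which extends harmonically across the analytic boundary, see \eqref{eq2.12}) and $\ln|\cdot|$ about $\overline{w}$, and using $H_{D}(0,\cdot)=\partial_{\mathbf{n}}\bigl(\psi-\ln|\cdot|/(2\pi)\bigr)$ on $\partial D$, one obtains, exactly as in Lemma~\ref{lem5},
\begin{equation*}
v_{h}(w)=8\,s\,H_{D}(0,\overline{w})+O(h^{2})=8\,|w-\overline{w}|\,H_{D}(0,\overline{w})+O(h^{2})
\end{equation*}
uniformly on $D_{3}$. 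Hence $h^{2}G_{h}(0,z)-8G_{D}(0,z)=8\,E^{z}\bigl[\,|S_{T_{D}}-\overline{S_{T_{D}}}|\,H_{D}(0,\overline{S_{T_{D}}})\,\bigr]+O(h^{2})$.

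Now fix $r:=h^{1-\epsilon}$, so that $h<r$ and, for small $h$, $r$ lies below the radius inside which $\partial D$ is uniformly flat, and put $A:=\{\overline{S_{T_{D}}}\in B(x,r)\}$. On $A^{c}$ we have $|v_{h}(S_{T_{D}})|\le Ch$, and by Lemma~\ref{lember} with $\mathrm{dist}(z,\partial D)\le h$ and $\mathrm{dist}_{D}(z;\partial D\setminus B(x,r))\ge r-h\ge r/2$ we get $P^{z}(A^{c})\le C(h/r)^{\beta}=Ch^{\epsilon\beta}$; this contributes $O(h^{1+\epsilon\beta})$. On $A$, $|\overline{S_{T_{D}}}-x|\le r$, so replacing $H_{D}(0,\overline{S_{T_{D}}})$ by $H_{D}(0,x)$ (Lipschitz continuity of the Poisson kernel on $\partial D$) costs $O(r)\cdot E^{z}|S_{T_{D}}-\overline{S_{T_{D}}}|=O(rh)=O(h^{2-2\epsilon})$, and removing the restriction to $A$ costs a further $O(h\,P^{z}(A^{c}))=O(h^{1+\epsilon\beta})$. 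Therefore
\begin{equation*}
h^{2}G_{h}(0,z)-8G_{D}(0,z)=8\,H_{D}(0,x)\,E^{z}\bigl[\,|S_{T_{D}}-\overline{S_{T_{D}}}|\,\bigr]+O(h^{1+\epsilon\beta})+O(h^{2-2\epsilon}).
\end{equation*}

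It remains to identify $E^{z}[\,|S_{T_{D}}-\overline{S_{T_{D}}}|\,]$ with $E^{li}[\,|\mathrm{Im}(S_{T_{\mathbb{H}}})|\,]$ up to the allowed error. Let $\mathbb{H}_{x}$ be the half-plane bounded by the tangent line to $\partial D$ at $x$ on the side of $D$; by rotational invariance of the walk, $E^{li}[\,|\mathrm{Im}(S_{T_{\mathbb{H}}})|\,]$ equals the expected distance from $\partial\mathbb{H}_{x}$ of the first exit of $\mathbb{H}_{x}$ by the walk started at $z=x+l\mathbf{n}_{x}$. Run the $D$-exit and the $\mathbb{H}_{x}$-exit on the same increments, and discard, via Lemma~\ref{lember}, the negligible event that the walk leaves $B(x,r)$ and returns to exit $D$ near $x$. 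Inside $B(x,r)$ the curves $\partial D$ and $\partial\mathbb{H}_{x}$ differ by $O(r^{2})$, so on $A$ the two exit times coincide except when the walk exits $D$ in the thin wedge between $\partial D$ and the tangent line; on that wedge event the two overshoots differ by at most $O(h)$, but its probability is bounded by $P^{z}(|S_{T_{D}}-\overline{S_{T_{D}}}|\le Cr^{2})\le(2Cr^{2}/\pi h)\,E^{z}[\#\{n<T_{D}:\mathrm{dist}(S_{n},\partial D)\le h\}]=O(r^{2}/h)$, the local-time factor being $O(1)$ (e.g.\ from $\int_{D_{2}}G_{h}(z,w)\,dw=O(1)$, which follows from Lemma~\ref{lem4}); when the exit times coincide the two overshoots differ by $O(r^{2})$. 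Hence $E^{z}[\,|S_{T_{D}}-\overline{S_{T_{D}}}|\,]=E^{li}[\,|\mathrm{Im}(S_{T_{\mathbb{H}}})|\,]+O(r^{2})+O(h^{1+\epsilon\beta})$, and since $r^{2}=h^{2-2\epsilon}$, collecting all terms yields the proposition; the restriction $\epsilon<1/2$ is exactly what makes $r^{2}=o(h)$. The main obstacle is this last step --- making rigorous, via the Beurling estimate (Lemma~\ref{lember}) and the local-time bound, the control of both the far-wandering event and the ``wedge'' event on which the $D$-exit and the half-plane exit disagree; the rest is bookkeeping of $O(h^{1+\epsilon\beta})$ errors (Beurling at scale $r$) and $O(h^{2-2\epsilon})$ errors (the $O(r^{2})$ flatness of the analytic boundary in $B(x,r)$).
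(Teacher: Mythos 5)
Your proof follows essentially the same route as the paper's. Both write the quantity as an exit expectation of the approximately linear boundary data $8\,|w-\overline{w}|\,H_D(0,\overline{w})+O(h^2)$ on $D_3$ (you via $v_h=e_h-8\psi$ and optional stopping, the paper by introducing the auxiliary solution $F_h$ of the linearized discrete Dirichlet problem and invoking Lemma~\ref{lemmax}); both then apply the Beurling estimate at scale $r=h^{1-\epsilon}$ to localize the exit near $x$, freeze $H_D$ at $x$ using its smoothness, and finally compare the $D$-exit overshoot to the half-plane exit overshoot using the $O(h^{2-2\epsilon})$ flatness of the analytic boundary at that scale. The only genuinely different detail is your treatment of this last comparison: the paper bounds $P^{z_0}\bigl(S_{T_D}\in B_{x_0}(h^{1-\epsilon},Mh^{2-2\epsilon})\bigr)$ by comparing directly with the exit distribution of the half-plane walk, whereas you bound the ``wedge'' event via a local-time estimate $P^{z}(|S_{T_D}-\overline{S_{T_D}}|\le Cr^2)\le (Cr^2/h)\,E^z[\#\{n<T_D:S_n\in D_2\}]$; both give $O(h^{1-2\epsilon})$ and hence the same final error.

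One point to repair: you justify $E^z[\#\{n<T_D:\mathrm{dist}(S_n,\partial D)\le h\}]=O(1)$ by citing Lemma~\ref{lem4} via $\int_{D_2}G_h(z,w)\,dw=O(1)$, but Lemma~\ref{lem4} concerns $G_h(0,\cdot)$, not $G_h(z,\cdot)$ for $z\in D_2$, so it does not directly deliver this. The bound is nonetheless true and is easy to get independently: from any point of $D_2$ the probability of exiting $D$ in a single step is bounded below by an absolute constant $c>0$ (a fixed fraction of the $h$-disk lies in $D_3$, by local flatness of $\partial D$), so by the strong Markov property the number of visits to $D_2$ before $T_D$ is stochastically dominated by a geometric random variable with success probability $c$. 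With that substitution your argument is sound; both proofs leave the bookkeeping of the coupling between $T_D$ and $T_{\mathbb{H}_x}$ somewhat informal, but the estimates and the structure are the same.
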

\begin{proof}
From the proof of Lemma \ref{lem4}, we know it is enough to prove for $z=x+l\mathbf{n}_x \in D_2$
\begin{equation}\label{eqgre1}
e_h(z)-8\psi(z)=8H_D(0,x)E^{li}[|\mbox{Im}(S_{T_{\mathbb{H}}})|]+O(h^{1+\epsilon\beta})+O(h^{2-2\epsilon}).
\end{equation}

If we write out the $O(h)$ in \eqref{eq2.14}, we see $e_h(z)-8\psi(z)$ satisfies
\begin{equation*}
\left\{
\begin{array}{ll}
\Delta_h [e_h(z)-8\psi(z)]=0 & z\in D\\
e_h(z)-8\psi(z)=-8G_D(0,z)+O(h^2), & z\in D_3,
\end{array}
\right.
\end{equation*}
where we have used the same notation (i.e., $G_D$) for the harmonic extension of the continuous Green's function. It is easy to see that Lemma \ref{lem5} is also true for $l\in[-h,0]$, i.e., $x+l\mathbf{n}_x\in D_3$. Let $F_h(0,z)$ be the solution of the following discrete Dirichlet problem
\begin{equation*}
\left\{
\begin{array}{ll}
\Delta_h [F_h(0,z)]=0 & z\in D\\
F_h(0,z)=-lH_D(0,x), & z=x+l\mathbf{n}_x\in D_3,
\end{array}
\right.
\end{equation*}

Then \eqref{eqgre1} follows from Lemmas \ref{lemmax} and \ref{lem5}, and the following \textbf{claim}:  for $z=x+l\mathbf{n}_x \in D_2$
\begin{equation}\label{eqgre2}
F_h(0,z)=H_D(0,x)E^{li}[|\mbox{Im}(S_{T_{\mathbb{H}}})|]+O(h^{1+\epsilon\beta})+O(h^{2-2\epsilon}).
\end{equation}
Let $\mathbf{t}_x$ be the unit tangent vector at $x\in\partial D$. One can choose either $\mathbf{t}_x=i\mathbf{n}_x$ or $\mathbf{t}_x=-i\mathbf{n}_x$. Let $\gamma(s)$, $0\leq s\leq |\partial D|$ be an arc-length parametrization of $\partial D$. For any $x\in\partial D$, let $\sigma(x)\in [0,|\partial D|]$ such that $\gamma(\sigma(x))=x$, i.e., $\sigma$ is the inverse of $\gamma$. Since $\partial D$ is analytic,
$$d_H\left(\gamma[\sigma(x)-h^{1-\epsilon},\sigma(x)+h^{1-\epsilon}],\mathbf{t}_x\cdot[-h^{1-\epsilon},h^{1-\epsilon}]\right)\leq M h^{2-2\epsilon}$$
uniformly for $x\in \partial D$, where $d_H$ is the Hausdorff distance and $\mathbf{t}_x\cdot[-h^{1-\epsilon},h^{1-\epsilon}]$ is the tangent line segment centered at $x$ with total length $2h^{1-\epsilon}$. For any $z_1=x_1+l_1\mathbf{n}_{x_1}\in D_3$ where $x_1\in\partial D$, define $x:D_3\rightarrow \partial D$ and $l:D_3\rightarrow [-h,0]$ such that $x(z_1)=x_1$ and $l(z_1)=l_1$. Note that $x(z_1)$ and $l(z_1)$ are uniquely defined if $h$ is small enough. For any $z_0=x_0+l_0\mathbf{n}_{x_0}\in D_2$, the comment after \eqref{eq2.4} implies
$$F_h(0,z_0)=E^{z_0}[-l(S_{T_D})H_D(0,x(S_{T_D}))].$$

Lemma \ref{lember} gives (noting that $x(S_{T_D})=\overline{S_{T_D}}$)
$$P^{x_0+l_0\mathbf{n}_{x_0}}\left(x(S_{T_D})\notin \gamma[\sigma(x_0)-h^{1-\epsilon},\sigma(x_0)+h^{1-\epsilon}]\right)=O(h^{\epsilon\beta}).$$

Therefore,
$$F_h(0,z_0)=E^{z_0}[-l(S_{T_D})H_D(0,x(S_{T_D}))I_{\{\gamma[\sigma(x_0)-h^{1-\epsilon},\sigma(x_0)+h^{1-\epsilon}]\}}(x(S_{T_D}))]+O(h^{1+\epsilon\beta}).$$

The smoothness of $H_D(0,x)$ in $x\in\partial D$ implies $H_D(0,x(S_{T_D}))=H_D(0,x_0)+O(h^{1-\epsilon})$ if $x(S_{T_D})\in \gamma[\sigma(x_0)-h^{1-\epsilon},\sigma(x_0)+h^{1-\epsilon}] $. Hence
$$F_h(0,z_0)=H_D(0,x_0)E^{z_0}[-l(S_{T_D})I_{\{\gamma[\sigma(x_0)-h^{1-\epsilon},\sigma(x_0)+h^{1-\epsilon}]\}}(x(S_{T_D}))]+O(h^{2-\epsilon})+O(h^{1+\epsilon\beta}).$$

Let
$$B_{x_0}(h^{1-\epsilon},M h^{2-2\epsilon}):=\mathbf{t}_{x_0}\cdot[-h^{1-\epsilon}-M h^{2-2\epsilon},h^{1-\epsilon}+M h^{2-2\epsilon}]+\mathbf{n}_{x_0}\cdot[-M h^{2-2\epsilon},M h^{2-2\epsilon}]$$
be the rectangle centered at $x_0$ with length $2h^{1-\epsilon}+2M h^{2-2\epsilon}$ and width $2M h^{2-2\epsilon}$. Then
$$\gamma[\sigma(x_0)-h^{1-\epsilon},\sigma(x_0)+h^{1-\epsilon}]\subseteq B_{x_0}(h^{1-\epsilon},M h^{2-2\epsilon}).$$

It is not hard to see
$$P^{z_0}\left(S(T_D)\in B_{x_0}(h^{1-\epsilon},M h^{2-2\epsilon}) \right)\leq\sup_{0\leq\tilde{l}\leq h}P^{\tilde{l}i}(|\mbox{Im}(S_{T_{\mathbb{H}}})|\leq 2M h^{2-2\epsilon})=O(h^{1-2\epsilon}).$$
Let $B_{x_0}^c(h^{1-\epsilon},M h^{2-2\epsilon}):=\mathbb{C}\setminus B_{x_0}(h^{1-\epsilon},M h^{2-2\epsilon})$. Then we have
\begin{eqnarray*}
F_h(0,z_0)&=&H_D(0,x_0)E^{z_0}[-l(S_{T_D})I_{\{\gamma[\sigma(x_0)-h^{1-\epsilon},\sigma(x_0)+h^{1-\epsilon}]\}}(x(S_{T_D}))\\
&&\times I_{B_{x_0}^c(h^{1-\epsilon},M h^{2-2\epsilon})}(S(T_D))]+O(h^{2-2\epsilon})+O(h^{1+\epsilon\beta})\\
&=&H_D(0,x_0)E^{li}[|\mbox{Im}(S_{T_{\mathbb{H}}})|]+O(h^{2-2\epsilon})+O(h^{1+\epsilon\beta}).
\end{eqnarray*}
This completes the \textbf{claim} (and hence the proposition) since $z_0$ is arbitrary.

\end{proof}

\subsection{A change of variables formula and an estimate of $\Delta_hf$}
From Lemma \ref{lem3}, we know the difference of $f_h(0)$ and $f(0)$ can be represented by a two-dimensional integral in $D_2$. We give a change of variables formula for such an integral in the following lemma.
\begin{lemma}\label{lem6}
Suppose $D$ is a simply connected and bounded Jordan domain, and $\partial D$ is analytic. Let $F$ be a Lebesgue measurable and bounded function on $D_2$. Suppose $z(t)=(u(t),v(t)), 0\leq t\leq 1$ is a parametrization of $\partial D$ such that $u^{\prime 2}(t)+v^{\prime2}(t)\neq 0$ for any $t$. Then the following holds for all small $h>0$
$$\int_{D_2}F(z)dz=\int_0^1\int_0^h F(z(t)+l\mathbf{n}_{z(t)})(1-l\frac{u^{\prime}v^{\prime\prime}-u^{\prime\prime}v^{\prime}}{(u^{\prime2}+v^{\prime2})^{3/2}})dl\sqrt{u^{\prime2}+v^{\prime2}}dt,$$
where $\mathbf{n}_{z(t)}$ is the inward unit normal at $z(t)$.
In particular, for all small $h>0$
$$\int_{D_2}F(z)dz=(1+O(h))\int_{\partial D}\int_0^h F(\xi+l\mathbf{n}_{\xi})dl|d\xi|,$$
where $O(h)$ only depends on $D$.
\end{lemma}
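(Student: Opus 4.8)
The plan is to prove Lemma \ref{lem6} by the standard change-of-variables argument for a tubular neighborhood of a smooth curve, and then deduce the ``in particular'' statement by estimating the Jacobian factor.

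\textbf{Step 1: Set up the tubular coordinates.} Define the map $\Phi:[0,1]\times[0,h]\to D_2$ by $\Phi(t,l)=z(t)+l\mathbf{n}_{z(t)}$. Since $\partial D$ is analytic and $h$ is small, the orthogonal projection onto $\partial D$ is well-defined on $D_2$ (as noted after \eqref{eqdh}), so $\Phi$ is a bijection from $[0,1)\times[0,h]$ onto $D_2$ (up to the measure-zero set where $t$ wraps around). Write $z(t)=(u(t),v(t))$; then $\mathbf{n}_{z(t)}=(\mp v'(t),\pm u'(t))/\sqrt{u'^2+v'^2}$ for the appropriate sign convention making it the inward normal. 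The first main step is to compute the Jacobian determinant $J(t,l)=\det D\Phi(t,l)$. Differentiating $\Phi$ in $t$ and in $l$, using that $\partial_l\Phi=\mathbf{n}_{z(t)}$ is a unit vector orthogonal to $\partial_t z(t)=z'(t)$, a direct computation gives
\begin{equation*}
J(t,l)=\sqrt{u'^2+v'^2}\left(1-l\,\frac{u'v''-u''v'}{(u'^2+v'^2)^{3/2}}\right).
\end{equation*}
(The quantity $(u'v''-u''v')/(u'^2+v'^2)^{3/2}$ is the signed curvature of $\partial D$; the formula is just the classical area element of a normal collar.) Since $\partial D$ is analytic and compact, $u',v',u'',v''$ are continuous and $u'^2+v'^2$ is bounded away from $0$, so $J$ is continuous and, for $h$ small enough, strictly positive on $[0,1]\times[0,h]$; in particular $\Phi$ is a diffeomorphism onto its image.

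\textbf{Step 2: Apply the change-of-variables theorem.} Since $F$ is Lebesgue measurable and bounded on $D_2$ and $\Phi$ is a $C^1$ diffeomorphism from $[0,1)\times[0,h]$ onto $D_2$ (modulo null sets), the change-of-variables formula for Lebesgue integrals gives
\begin{equation*}
\int_{D_2}F(z)\,dz=\int_0^1\!\!\int_0^h F(\Phi(t,l))\,|J(t,l)|\,dl\,dt,
\end{equation*}
which is exactly the first displayed identity once $|J|=J$ is substituted. For the ``in particular'' statement, observe that on $[0,1]\times[0,h]$ the curvature term satisfies $l\,\kappa(t)=O(h)$ uniformly (with constant depending only on $\sup|\kappa|$, hence only on $D$), so $J(t,l)=(1+O(h))\sqrt{u'^2+v'^2}$; factoring this out and recognizing $\sqrt{u'^2+v'^2}\,dt=|d\xi|$ as arc length along $\partial D$, and $\Phi(t,l)=\xi+l\mathbf{n}_\xi$ with $\xi=z(t)$, yields
\begin{equation*}
\int_{D_2}F(z)\,dz=(1+O(h))\int_{\partial D}\!\!\int_0^h F(\xi+l\mathbf{n}_\xi)\,dl\,|d\xi|,
\end{equation*}
where the $O(h)$ depends only on $D$; here boundedness of $F$ is used to pull the uniform $1+O(h)$ factor out of the integral.

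\textbf{Main obstacle.} The only genuinely non-routine point is justifying that $\Phi$ is a bijection onto all of $D_2$ for small $h$ — that is, that every point within distance $h$ of $\partial D$ lies on a unique inward normal segment of length $\le h$ and that distinct $(t,l)$ give distinct points. This is precisely the existence and uniqueness of the nearest-point projection in a tubular neighborhood, which holds because $\partial D$ is analytic (so it has a positive reach), and it is the same fact already invoked right after \eqref{eqdh} to make $\overline{S_{T_D}}$ well-defined. Everything else — the Jacobian computation and the $O(h)$ estimate of the curvature correction — is elementary calculus using compactness and analyticity of $\partial D$.
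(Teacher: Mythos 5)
Your proof is correct and follows the same approach as the paper: the paper's proof simply writes down the change of variables $z = z(t) + l\mathbf{n}_{z(t)}$ (with the explicit component formula) and notes that $h$ must be small so the map is one-to-one. You have filled in the Jacobian computation, the curvature interpretation, and the injectivity justification via positive reach, all of which the paper leaves implicit.
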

\begin{proof}
The lemma follows from the change of variables
$$z=(x,y)=z(t)+l\mathbf{n}_{z(t)}=(u(t)-\frac{lv^{\prime}}{\sqrt{u^{\prime2}+v^{\prime2}}},v(t)+\frac{lu^{\prime}}{\sqrt{u^{\prime2}+v^{\prime2}}}).$$
Note that one needs to pick $h$ small to make sure the above change of variables is one-to-one.
\end{proof}

We still need some estimate on $\Delta_hf$ for the $f$ defined in Proposition \ref{propone}.
\begin{lemma}\label{lem7}
Under the assumption of Proposition \ref{propone}, for any $x\in\partial D$ and $l\in[0,h]$ we have
$$\Delta_h f(x+l\mathbf{n}_x)=\frac{1}{\pi h^2}\frac{\partial f(x)}{\partial \mathbf{n}_x}[\frac{2}{3}h^2\sqrt{h^2-l^2}+\frac{l^2}{3}\sqrt{h^2-l^2}-lh^2\arccos(l/h)]+O(h^2),$$
where $O(h^2)$ only depends on $f$ and $D$.
\end{lemma}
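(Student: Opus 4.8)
The plan is to linearize everything at $x$ and to reduce the planar average defining $\Delta_h f$ to an elementary integral over a circular cap. Fix $x\in\partial D$ and choose local coordinates $(s,t)$ centered at $x$ with $t$ in the direction of the inward normal $\mathbf{n}_x$ and $s$ in the direction of the tangent $\mathbf{t}_x=i\mathbf{n}_x$. Since $\partial D$ is analytic and compact, near $x$ the boundary is the graph $t=\phi(s)$ of an analytic function with $\phi(0)=\phi'(0)=0$, so $|\phi(s)|\le M s^{2}$ for $|s|\le 2h$ with $M$ uniform in $x$. Extend $f$ to a fixed neighborhood of $\bar{D}$ by $f(P):=f(\bar P)$, where $\bar P$ is the analytic nearest-point projection onto $\partial D$; this extension is Lipschitz with a constant uniform in $x$, and since $f\in C^{2}(\bar{D})$ one has, uniformly in $x$ and for $w=(s,t)$ with $|w|\le 2h$,
\[
f(x+w)=\begin{cases} f(x)+a s+b t+O(|w|^{2}), & x+w\in D,\\[2pt] f(x)+a s+O(|w|^{2}), & x+w\notin D,\end{cases}
\]
where $a$ is the tangential derivative of $f$ at $x$, $b=\tfrac{\partial f}{\partial\mathbf{n}_x}(x)$, and the second line uses $\overline{x+w}=x+(s,0)+O(|w|^{2})$. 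In particular, for $z=x+l\mathbf{n}_x=(0,l)$ with $l\in[0,h]$ we get $f(z)-f(x)=bl+O(h^{2})$.

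Next I split the disk $B(z,h)$ along the straight line $t=0$ into $B^{+}=B(z,h)\cap\{t>0\}$ and $B^{-}=B(z,h)\cap\{t<0\}$. Replacing the true decomposition of $B(z,h)$ into its parts inside and outside $D$ by this straight decomposition alters the two pieces only within a set of area $O(h^{3})$ (the boundary graph deviates from the tangent line by $O(h^{2})$ over a length $\le 2h$), on which $|f(\zeta)-f(z)|=O(h)$; this contributes an error of size $O(h^{4})/(\pi h^{2})=O(h^{2})$ to $\Delta_h f(z)$. Substituting the linear approximations (the $O(|w|^{2})$ remainders integrate to $O(h^{2})\cdot|B(z,h)|=O(h^{4})$) gives
\[
\pi h^{2}\,\Delta_h f(z)=\int_{B(z,h)}\!\!\big(f(\zeta)-f(z)\big)\,d\zeta = a\!\int_{B(z,h)}\!\! s\,d\zeta+ b\!\int_{B^{+}}\!\! t\,d\zeta- bl\,|B(z,h)|+O(h^{4}).
\]
Here $\int_{B(z,h)}s\,d\zeta=0$ by reflection symmetry in $s$, and since the centroid of $B(z,h)$ has height $l$ we have $\int_{B(z,h)}t\,d\zeta=l\,\pi h^{2}$, hence $\int_{B^{+}}t\,d\zeta=l\pi h^{2}-\int_{B^{-}}t\,d\zeta$ and the display collapses to $\pi h^{2}\,\Delta_h f(z)=-b\int_{B^{-}}t\,d\zeta+O(h^{4})$.

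Finally, with $u=t-l$ the cap integral is
\[
\int_{B^{-}} t\,d\zeta=\int_{-h}^{-l} 2(u+l)\sqrt{h^{2}-u^{2}}\,du =-\tfrac23(h^{2}-l^{2})^{3/2}-l^{2}\sqrt{h^{2}-l^{2}}+l h^{2}\arccos(l/h),
\]
using $\int u\sqrt{h^{2}-u^{2}}\,du=-\tfrac13(h^{2}-u^{2})^{3/2}$ and $\int 2\sqrt{h^{2}-u^{2}}\,du=u\sqrt{h^{2}-u^{2}}+h^{2}\arcsin(u/h)$. Since $\tfrac23(h^{2}-l^{2})^{3/2}+l^{2}\sqrt{h^{2}-l^{2}}=\tfrac23 h^{2}\sqrt{h^{2}-l^{2}}+\tfrac13 l^{2}\sqrt{h^{2}-l^{2}}$, multiplying by $-b/(\pi h^{2})$ recovers exactly the stated formula, with the $O(h^{2})$ uniform in $x$ and $f$. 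The only delicate point is the bookkeeping of the error terms and their uniformity in $x$: this rests on the analyticity of $\partial D$ (which yields the uniform bound $\phi=O(s^{2})$, the positive reach needed to define and linearize the projection-extension of $f$, and the $O(h^{3})$ area of the symmetric difference between the true and flattened halves of $B(z,h)$) together with the compactness of $\partial D$ and the $C^{2}(\bar{D})$ bound on $f$; everything else is the symmetry reduction and the elementary cap integral.
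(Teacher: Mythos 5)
Your proof is correct and reaches exactly the stated formula, but it follows a genuinely different route from the paper's. The paper works in polar coordinates centered at $z = x + l\mathbf{n}_x$: it isolates the inner disk of radius $l$, over which the integral of $f(\zeta) - f(z)$ vanishes exactly by the mean value property, and then splits the remaining annular region along the tangent line into two integrals $I_1$ and $I_2$, Taylor expanding each one separately in the variables $(r,\theta)$ and invoking odd symmetry in $\theta$ to kill the tangential terms. Your argument instead works in Cartesian $(s,t)$ coordinates and never singles out the inner disk: you linearize $f$ on all of $B(z,h)$ (with the extension $f(P)=f(\bar P)$ giving $f\approx f(x)+as$ on the outside), flatten the boundary to $t=0$ at a cost of $O(h^{3})$ in area hence $O(h^{2})$ in $\Delta_h f$, and then use the centroid identity $\int_{B(z,h)} t\,d\zeta = l\pi h^{2}$ to collapse $\int_{B^{+}}t - l\pi h^{2}$ to $-\int_{B^{-}}t$, reducing the whole computation to a single elementary cap integral. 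This centroid reduction is the main thing you do differently and it is tidier than carrying the two separate polar integrals; the paper's approach has the advantage of producing an exact cancellation on the inner disk rather than another $O(h^{2})$ estimate, but since both errors land in the same $O(h^{2})$ bucket this makes no difference to the final statement. Your bookkeeping of the $O(h^{2})$ error and its uniformity in $x$ (via analyticity and compactness of $\partial D$ and $f\in C^{2}(\bar D)$) is sound, and your cap integral evaluates to precisely $-\tfrac{2}{3}(h^{2}-l^{2})^{3/2}-l^{2}\sqrt{h^{2}-l^{2}}+lh^{2}\arccos(l/h)$, which, after the algebraic regrouping $\tfrac{2}{3}(h^{2}-l^{2})^{3/2}+l^{2}\sqrt{h^{2}-l^{2}}=\tfrac{2}{3}h^{2}\sqrt{h^{2}-l^{2}}+\tfrac{1}{3}l^{2}\sqrt{h^{2}-l^{2}}$, matches the lemma.
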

\begin{proof}
\begin{eqnarray}
\Delta_h f(x+l\mathbf{n}_x)&=&\frac{1}{\pi h^2}\int_{B(0,h)}[f(x+l\mathbf{n}_x+\xi)-f(x+l\mathbf{n}_x)]d\xi\nonumber\\
&=&\frac{1}{\pi h^2}\int_0^l\int_0^{2\pi}r[f(x+l\mathbf{n}_x+re^{i\theta})-f(x+l\mathbf{n}_x)]d\theta dr\nonumber\\
&&+\frac{1}{\pi h^2}\int_l^h\int_0^{2\pi}r[f(x+l\mathbf{n}_x+re^{i\theta})-f(x+l\mathbf{n}_x)]d\theta dr\label{eq3.2}
\end{eqnarray}
Since $f$ is harmonic in $D$, the first integral in \eqref{eq3.2} is zero due to the mean value property for $f$. Since $\partial D$ is analytic, there exist a conformal map $\Psi$ from $\mathbb{D}$ to $D$ and an $\epsilon>0$ such that $\Psi$ can be extended to a conformal map of $(1+\epsilon)\mathbb{D}$. This implies that the arc $\partial D\cap B(x,h)$ can be approximated by the tangent line segment at $x$ with fixed length of $O(h)$. More precisely, there exist constants $C_1>0$ and $C_2>0$ such that
$$\partial D\cap B(x,h)\subset \mathbf{t}_x\cdot[-C_1h,C_1h]+\mathbf{n}_x\cdot[-C_2h^2,C_2h^2]$$
uniformly for all $x\in\partial D$, where $\mathbf{t}_x$ is the unit tangent vector at $x$.
Therefore \eqref{eq3.2} gives
\begin{eqnarray*}
&&\Delta_h f(x+l\mathbf{n}_x)=\frac{1}{\pi h^2}\int_l^h\int_{-\pi/2-\arcsin(l/r)}^{\pi/2+\arcsin(l/r)}r[f(x+l\mathbf{n}_x+r\mathbf{n}_xe^{i\theta})-f(x+l\mathbf{n}_x)]d\theta dr\\
&&+\frac{1}{\pi h^2}\int_l^h\int_{-\arccos(l/r)}^{\arccos(l/r)}r[f(x+l\mathbf{n}_x-r\mathbf{n}_xe^{i\theta})-f(x+l\mathbf{n}_x)]d\theta dr+O(h^3)\\
&&:=I_1(x,l)+I_2(x,l)+O(h^3).
\end{eqnarray*}
Noticing that $f\in C^2(\bar{D})$, by Taylor expansion of $f$ about $x$ with coordinate directions $\mathbf{n}_x$ and $-i\mathbf{n}_x$, we get
\begin{eqnarray*}
I_1(x,l)&=&\frac{1}{\pi h^2}\int_l^h\int_{-\pi/2-\arcsin(l/r)}^{\pi/2+\arcsin(l/r)}r[\frac{\partial f(x)}{\partial \mathbf{n}_x}(r\cos\theta)]d\theta dr+O(h^2)\\
\end{eqnarray*}
where we used the fact the coefficient for $\frac{\partial f(x)}{\partial (i\mathbf{n}_x)}$  (which is $r\sin\theta$) is an odd function of $\theta$.

Similarly, by the definition of $f$ in $D_3$ and the Taylor expansion, we have
\begin{eqnarray*}
I_2(x,l)&=&\frac{1}{\pi h^2}\int_l^h\int_{-\arccos(l/r)}^{\arccos(l/r)}r[f(x-i\mathbf{n}_xr\sin(\theta))-f(x+l\mathbf{n}_x)]d\theta dr+O(h^2)\\
&=&\frac{1}{\pi h^2}\int_l^h\int_{-\arccos(l/r)}^{\arccos(l/r)}r[-l\frac{\partial f(x)}{\partial \mathbf{n}_x}]d\theta dr+O(h^2).
\end{eqnarray*}
The lemma follows by simple computations of $I_1(x,l)$ and $I_2(x,l)$.
\end{proof}

\subsection{Proof of Proposition \ref{propone}}
Now we are ready to prove Proposition \ref{propone}.

\begin{proof}[Proof of Proposition \ref{propone}]
Recall that $\int g(\xi) \omega_h(0,|d\xi|;D)=f_h(0)$ and
$\int g(\xi)\omega(0,|d\xi|;D)=f(0)$.
By Lemmas \ref{lem3} and \ref{lem6}, we have
\begin{eqnarray*}
&&f_h(0)-f(0)=\int_{D_2} G_h(0,z)\Delta_h f(z) dz\\
&=&(1+O(h))\int_{\partial D}\int_0^h G_h(0,x+l\mathbf{n}_x)\Delta_h f(x+l\mathbf{n}_x)dl|dx|.
\end{eqnarray*}

Using Lemma \ref{lem7}, we get
\begin{eqnarray*}
D(h)&:=&\int_{\partial D}\int_0^h G_h(0,x+l\mathbf{n}_x)\Delta_h f(x+l\mathbf{n}_x)dl|dx|\\
&=&\frac{1}{\pi h^2}\int_{\partial D}\frac{\partial f(x)}{\partial \mathbf{n}_x}\int_0^hG_h(0,x+l\mathbf{n}_x)\\
&&*[\frac{2}{3}h^2\sqrt{h^2-l^2}+\frac{l^2}{3}\sqrt{h^2-l^2}-lh^2\arccos(l/h)+O(h^4)]dl|dx|.
\end{eqnarray*}
Applying Proposition \ref{prop}, we obtain
\begin{eqnarray*}
D(h)&=&\frac{8}{\pi h^4}\int_{\partial D}\frac{\partial f(x)}{\partial \mathbf{n}_x}\int_0^h(G_D(0,x+l\mathbf{n}_x)+H_D(0,x)E^{li}[|\mbox{Im}(S_{T_{\mathbb{H}}})|]+o(h))\\
&&* [\frac{2}{3}h^2\sqrt{h^2-l^2}+\frac{l^2}{3}\sqrt{h^2-l^2}-lh^2\arccos(l/h)+O(h^4)]dl|dx|.
\end{eqnarray*}
Substituting the estimate in Lemma \ref{lem5} into the above equality, we see
\begin{eqnarray*}
D(h)&=&\frac{8}{\pi h^4}\int_{\partial D}\frac{\partial f(x)}{\partial \mathbf{n}_x}\int_0^h(lH_D(0,x)+H_D(0,x)E^{li}[|\mbox{Im}(S_{T_{\mathbb{H}}})|]+o(h))\\
&&* [\frac{2}{3}h^2\sqrt{h^2-l^2}+\frac{l^2}{3}\sqrt{h^2-l^2}-lh^2\arccos(l/h)+O(h^4)]dl|dx|.
\end{eqnarray*}
By the change of variables $l=h\cos\theta$, we have
\begin{eqnarray*}
D(h)&=&h[\frac{16}{45\pi}+\frac{8}{\pi}\int_0^{\pi/2}(\sin^2\theta-(\sin^4\theta)/3-\theta\cos\theta\sin\theta)E^{i\cos\theta}_{h=1}(|\mbox{Im}(S_{T_{\mathbb{H}}})|)d\theta]\\
&&*\int_{\partial D}\frac{\partial f(x)}{\partial \mathbf{n}_x}H_D(0,x)|dx|+o(h),
\end{eqnarray*}
which completes the proof.

\end{proof}

\section{Proof of density for limiting measure}\label{densityproof}
\begin{proposition}\label{proptwo}
Under the assumption of Proposition \ref{propone}, let $\psi$ be a conformal map from $D$ to $\mathbb{D}$ which sends $0$ to the origin. For $\theta\in[0,2\pi]$, define $m(\theta)=|\psi^{\prime}(\psi^{-1}(e^{i\theta}))|$. Then we have
\begin{equation*}
\int_{\partial D} H_D(0,z)\frac{\partial f(z)}{\partial \mathbf{n}_z}|dz|=\int_{0}^{2\pi}(g\circ\psi^{-1})(e^{i\phi})\rho(\phi)d\phi,
\end{equation*}
where
\begin{equation*}
\rho(\phi)=\frac{1}{4\pi^2}\int_0^{2\pi}\frac{m(\theta)-m(\phi)-m^{\prime}(\phi)\sin(\theta-\phi)}{1-\cos(\theta-\phi)}d\theta.
\end{equation*}
\end{proposition}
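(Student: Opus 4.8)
## Proof proposal for Proposition \ref{proptwo}

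The plan is to transport the integral $\int_{\partial D} H_D(0,z)\frac{\partial f}{\partial \mathbf n_z}(z)\,|dz|$ to the unit circle via the conformal map $\psi$, so that both the Poisson kernel and the normal derivative of $f$ acquire explicit expressions, and then recognize the resulting boundary integral as the pairing of $g\circ\psi^{-1}$ against the claimed kernel $\rho$. First I would use conformal invariance of the Green's function: $G_D(0,z)=G_{\mathbb D}(0,\psi(z))=-\frac{1}{2\pi}\ln|\psi(z)|$. Differentiating in the inward normal direction and using $|\psi^{\prime}|$ as the local scaling factor gives $H_D(0,z)=\frac{1}{2\pi}|\psi^{\prime}(z)|$ for $z\in\partial D$. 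Parametrizing $\partial D$ by $z=\psi^{-1}(e^{i\theta})$, one has $|dz|=\frac{1}{m(\theta)}\,d\theta$ with $m(\theta)=|\psi^{\prime}(\psi^{-1}(e^{i\theta}))|$, and $H_D(0,z)=\frac{1}{2\pi}m(\theta)$, so the $m(\theta)$ factors partially cancel and the measure $H_D(0,z)|dz|=\frac{1}{2\pi}d\theta$ is just normalized Lebesgue measure on the circle — as it must be, since $\omega(0,\cdot;D)$ pushes forward to uniform measure under $\psi$.

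Next I would handle the normal derivative. Let $\tilde f=f\circ\psi^{-1}$, which is harmonic on $\mathbb D$ with boundary values $\tilde g(e^{i\theta})=(g\circ\psi^{-1})(e^{i\theta})$. By the chain rule and the Cauchy–Riemann equations, the inward normal derivative transforms conformally up to the factor $|\psi^{\prime}|$: $\frac{\partial f}{\partial \mathbf n_z}(z)=m(\theta)\,\frac{\partial \tilde f}{\partial \mathbf n}(e^{i\theta})$, where on the right $\mathbf n$ is the inward normal to $\mathbb D$, i.e. $-\partial_r$. Combining with the previous paragraph,
\begin{equation*}
\int_{\partial D} H_D(0,z)\frac{\partial f}{\partial \mathbf n_z}(z)\,|dz|
=\frac{1}{2\pi}\int_0^{2\pi} m(\theta)\,\frac{\partial \tilde f}{\partial \mathbf n}(e^{i\theta})\,d\theta.
\end{equation*}
So everything reduces to expressing $\frac{\partial \tilde f}{\partial \mathbf n}$ on the circle in terms of $\tilde g$. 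Writing $\tilde f$ via the Poisson integral, $\tilde f(re^{i\phi})=\frac{1}{2\pi}\int_0^{2\pi}\frac{1-r^2}{1-2r\cos(\theta-\phi)+r^2}\tilde g(e^{i\theta})\,d\theta$, I would differentiate in $r$, take $r\uparrow 1$ (carefully, since the kernel $\partial_r$ of the Poisson kernel is not absolutely integrable in the limit), and obtain the normal-derivative (Dirichlet-to-Neumann) operator. The standard outcome is the hypersingular/conjugate-type kernel: for smooth $\tilde g$,
\begin{equation*}
-\frac{\partial \tilde f}{\partial \mathbf n}(e^{i\phi})=\frac{\partial \tilde f}{\partial r}(e^{i\phi})
=\frac{1}{2\pi}\,\mathrm{p.v.}\!\int_0^{2\pi}\frac{\tilde g(e^{i\theta})-\tilde g(e^{i\phi})}{1-\cos(\theta-\phi)}\,d\theta
\end{equation*}
(one derives this by subtracting $\tilde g(e^{i\phi})$ inside the integral to kill the divergence, using that the Poisson kernel integrates to $1$). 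Substituting this into the displayed integral, interchanging the order of integration in $\theta$ and $\phi$ (justified since after the subtraction the integrand is absolutely integrable, as $m$ and $\tilde g$ are smooth — here one uses the analyticity of $\partial D$, hence smoothness of $m$, and the $C^2(\bar D)$ regularity of $f$), and relabeling, I would collect the coefficient of $\tilde g(e^{i\phi})$.

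The last step is the bookkeeping that produces the symmetric-looking formula for $\rho$. After swapping, the coefficient of $(g\circ\psi^{-1})(e^{i\phi})$ is $-\frac{1}{4\pi^2}\int_0^{2\pi}\frac{m(\theta)-m(\phi)}{1-\cos(\theta-\phi)}\,d\theta$ plus, from the diagonal term, $+\frac{1}{4\pi^2}$ times $m(\phi)\int\frac{-1}{1-\cos(\theta-\phi)}$... — more precisely one must be careful to add and subtract a first-order term in $m$ as well so that each piece is individually convergent. The clean way is to note $\int_0^{2\pi}\frac{\sin(\theta-\phi)}{1-\cos(\theta-\phi)}\,d\theta=0$ by oddness (principal value), so one may insert $-m^{\prime}(\phi)\sin(\theta-\phi)$ into the numerator for free; this makes the numerator $m(\theta)-m(\phi)-m^{\prime}(\phi)\sin(\theta-\phi)=O((\theta-\phi)^2)$ near the diagonal, matching the $(\theta-\phi)^2$ vanishing of $1-\cos(\theta-\phi)$, so the integral is a genuine (absolutely convergent) integral and equals $4\pi^2\rho(\phi)$ as stated. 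I would then check the overall sign: the inward normal derivative of $f$ and the $\partial_r$ of $\tilde f$ differ by a sign, and this sign should make $\rho$ come out exactly as written; I expect the signs to work out so that $\int H_D \frac{\partial f}{\partial\mathbf n}|dz|=\int_0^{2\pi}(g\circ\psi^{-1})(e^{i\phi})\rho(\phi)\,d\phi$.

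The main obstacle is the rigorous passage to the boundary in the Dirichlet-to-Neumann step: justifying the $r\uparrow 1$ limit of $\partial_r$ of the Poisson integral and the subsequent Fubini interchange when a principal-value singularity is present. This is where the smoothness hypotheses are really used — analyticity of $\partial D$ gives $m\in C^\infty$ (indeed real-analytic), and $f\in C^2(\bar D)$ gives enough regularity of $\tilde g$ that $\tilde g(e^{i\theta})-\tilde g(e^{i\phi})$ is $O(|\theta-\phi|)$, which is not quite enough to make $\frac{\tilde g(\theta)-\tilde g(\phi)}{1-\cos(\theta-\phi)}$ absolutely integrable, so one genuinely works with principal values (or, equivalently, integrates by parts once to move a derivative onto $\tilde g$). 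An alternative route that sidesteps some of this is to expand $\tilde g$ and $m$ in Fourier series and compute $\int m(\theta)\frac{\partial\tilde f}{\partial\mathbf n}d\theta$ mode by mode (using $\partial_r$ of $r^{|n|}e^{in\theta}$ at $r=1$ equals $|n|e^{in\theta}$), then resum; this makes the interchange a statement about absolutely convergent series under the stated smoothness, and one recognizes $\sum_n |n| e^{in(\phi-\theta)}$ as (the distributional form of) $-\frac{1}{1-\cos(\theta-\phi)}$ after regularization, recovering the same $\rho$. Either way the analytic content is the Dirichlet-to-Neumann map on the disc; the conformal transport in the first two paragraphs is routine.
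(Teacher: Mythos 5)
Your plan is, at heart, the same as the paper's: transport to the disk via $\psi$ (so that $H_D(0,z)\,|dz|$ becomes $d\theta/2\pi$ and $\partial f/\partial\mathbf{n}_z$ picks up the factor $m(\theta)$), represent the normal derivative of $\tilde f=f\circ\psi^{-1}$ through the Poisson kernel, interchange the $\theta$ and $\phi$ integrations, insert the zero-total term $m'(\phi)\sin(\theta-\phi)$, and read off $\rho$. The difference is the order of limits, and it matters. The paper keeps $\tilde f$ evaluated at radius $1-\epsilon$ throughout: the Fubini step, the $\theta\leftrightarrow\phi$ relabeling, and the insertion of $m'(\phi)\sin(\theta-\phi)H_{\mathbb D}\bigl((1-\epsilon)e^{i\phi},e^{i\theta}\bigr)$ are all performed while the kernel is still bounded, and only at the very end is $\epsilon\to 0$ taken by dominated convergence, with L'H\^{o}pital supplying the uniform dominating bound. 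You pass to the boundary first, replacing the Poisson kernel by $-(1-\cos(\theta-\phi))^{-1}$, and your next step --- interchanging $\theta$ and $\phi$ --- is exactly where the stated justification fails. Since $\tilde g$ is only $C^2$, the numerator $\tilde g(e^{i\theta})-\tilde g(e^{i\phi})$ is merely $O(|\theta-\phi|)$, so the double integrand $m(\theta)\bigl[\tilde g(e^{i\phi})-\tilde g(e^{i\theta})\bigr]/(1-\cos(\theta-\phi))$ has a genuine $|\theta-\phi|^{-1}$ singularity on the diagonal and is \emph{not} absolutely integrable over $[0,2\pi]^2$; Fubini does not apply in the naive form you invoke. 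You notice this yourself two sentences later, which makes the proposal internally inconsistent at the crucial step. The repair is either to keep the $\epsilon$ regulator through the swap (the paper's route) or to run your Fourier-mode alternative to the end; either works, but the interchange step as written has a hole.

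There is also a sign slip worth flagging. Differentiating the Poisson kernel gives $\partial_r P_r(\alpha)|_{r=1}=-(1-\cos\alpha)^{-1}$, so $\partial_r\tilde f(e^{i\phi})=-\frac{1}{2\pi}\,\mathrm{p.v.}\int_0^{2\pi}\frac{\tilde g(e^{i\theta})-\tilde g(e^{i\phi})}{1-\cos(\theta-\phi)}\,d\theta$; your displayed Dirichlet-to-Neumann formula omits that minus sign. Combined with $\partial/\partial\mathbf{n}=-\partial_r$ on $\partial\mathbb D$ the two minuses cancel, so the final $\rho$ does come out with the stated sign, as you anticipated --- but as displayed your formula would have led to $-\rho$.
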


\begin{proof}
We define
\begin{equation*}
I(g)=\int_{\partial D} H_D(0,z)\frac{\partial f(z)}{\partial \mathbf{n}_z}|dz|.
\end{equation*}
Using the change of variables $\psi(z)=e^{i\theta}$ for $z\in\partial D$, we have
\begin{eqnarray*}
\frac{\partial f(z)}{\partial \mathbf{n}_z}&=&m(\theta)\frac{\partial (f\circ\psi^{-1})(e^{i\theta})}{\partial \mathbf{n}_{e^{i\theta}}}=m(\theta)\lim_{\epsilon\downarrow0}\frac{(f\circ\psi^{-1})(e^{i\theta}+\epsilon\mathbf{n}_{e^{i\theta}})-(f\circ\psi^{-1})(e^{i\theta})}{\epsilon}\\
&=&m(\theta)\lim_{\epsilon\downarrow0}\frac{(f\circ\psi^{-1})\left((1-\epsilon)e^{i\theta}\right)-(f\circ\psi^{-1})(e^{i\theta})}{\epsilon}.
\end{eqnarray*}
Under the same change of variables, the harmonic measure $H_D(0,z)|dz|$ transforms to $\frac{1}{2\pi}d\theta$. So
\begin{equation*}
I(g)=\frac{1}{2\pi}\int_{0}^{2\pi}m(\theta)\lim_{\epsilon\downarrow0}\frac{(f\circ\psi^{-1})\left((1-\epsilon)e^{i\theta}\right)-(f\circ\psi^{-1})(e^{i\theta})}{\epsilon}d\theta.
\end{equation*}
By the assumption of the proposition, $m(\theta)$ is a smooth function of $\theta$ and $f\circ\psi^{-1}\in C^2(\bar{\mathbb{D}})$. So the mean value theorem and the bounded convergence theorem give
$$I(g)=\frac{1}{2\pi}\lim_{\epsilon\downarrow0}\int_{0}^{2\pi}m(\theta)\frac{(f\circ\psi^{-1})\left((1-\epsilon)e^{i\theta}\right)-(f\circ\psi^{-1})(e^{i\theta})}{\epsilon}d\theta.$$
Note that $f\circ\psi^{-1}$ is the harmonic function on $\mathbb{D}$ with boundary data $g\circ\psi^{-1}$. We now have
$$I(g)=\frac{1}{2\pi}\lim_{\epsilon\downarrow0}\int_{0}^{2\pi}m(\theta)\frac{\int_0^{2\pi}H_{\mathbb{D}}\left((1-\epsilon)e^{i\theta},e^{i\phi}\right)[(g\circ\psi^{-1})(e^{i\phi})-(g\circ\psi^{-1})(e^{i\theta})]d\phi}{\epsilon}d\theta.$$
Fubini's theorem and interchange of $\theta$ and $\phi$ imply
$$I(g)=\lim_{\epsilon\downarrow0}\int_{0}^{2\pi}(g\circ\psi^{-1})(e^{i\phi})\int_0^{2\pi}\frac{m(\theta)H_{\mathbb{D}}\left((1-\epsilon)e^{i\theta},e^{i\phi}\right)-m(\phi)H_{\mathbb{D}}\left((1-\epsilon)e^{i\phi},e^{i\theta}\right)}{2\pi\epsilon}d\theta d\phi$$
Reflection symmetry implies
$$\int_0^{2\pi}m^{\prime}(\phi)\sin(\theta-\phi)H_{\mathbb{D}}\left((1-\epsilon)e^{i\phi},e^{i\theta}\right)d\theta=0.$$
So we can rewrite our integral as
\begin{eqnarray*}
I(g)&=&\frac{1}{2\pi}\lim_{\epsilon\downarrow0}\int_{0}^{2\pi}(g\circ\psi^{-1})(e^{i\phi})\int_0^{2\pi}[\frac{m(\theta)H_{\mathbb{D}}\left((1-\epsilon)e^{i\theta},e^{i\phi}\right)}{\epsilon}\\
&&-\frac{m(\phi)H_{\mathbb{D}}\left((1-\epsilon)e^{i\phi},e^{i\theta}\right)}{\epsilon}-\frac{m^{\prime}(\phi)\sin(\theta-\phi)H_{\mathbb{D}}\left((1-\epsilon)e^{i\phi},e^{i\theta}\right)}{\epsilon}]d\theta d\phi.
\end{eqnarray*}
Rotation and reflection symmetries imply
\begin{eqnarray*}
H_{\mathbb{D}}\left((1-\epsilon)e^{i\theta},e^{i\phi}\right)&=&H_{\mathbb{D}}\left((1-\epsilon)e^{i\phi},e^{i\theta}\right)=H_{\mathbb{D}}\left((1-\epsilon),e^{i(\theta-\phi)}\right)\\
&=&\frac{1}{2\pi}\frac{2\epsilon-\epsilon^2}{2-2\cos(\theta-\phi)-[2-2\cos(\theta-\phi)]\epsilon+\epsilon^2},
\end{eqnarray*}
where we have used the explicit expression for the Poisson kernel in $\mathbb{D}$ (see, e.g., Example 2.16 of \cite{Law05}).
So we now have
\begin{eqnarray*}
I(g)&=&\frac{1}{4\pi^2}\lim_{\epsilon\downarrow0}\int_{0}^{2\pi}(g\circ\psi^{-1})(e^{i\phi})\int_0^{2\pi}\frac{(2-\epsilon)[m(\theta)-m(\phi)-m^{\prime}(\phi)\sin(\theta-\phi)]}{2-2\cos(\theta-\phi)-[2-2\cos(\theta-\phi)]\epsilon+\epsilon^2}d\theta d\phi.
\end{eqnarray*}
Viewing $2-2\cos(\theta-\phi)-[2-2\cos(\theta-\phi)]\epsilon+\epsilon^2$ as a quadratic function of $\epsilon$, it is easy to show for any $0<\epsilon<1$ one has
$$2-2\cos(\theta-\phi)-[2-2\cos(\theta-\phi)]\epsilon+\epsilon^2\geq [1-\cos(\theta-\phi)]/2.$$
Therefore
\begin{equation}\label{eqlh}
|\frac{(2-\epsilon)[m(\theta)-m(\phi)-m^{\prime}(\phi)\sin(\theta-\phi)]}{2-2\cos(\theta-\phi)-[2-2\cos(\theta-\phi)]\epsilon+\epsilon^2}|\leq\frac{4|m(\theta)-m(\phi)-m^{\prime}(\phi)\sin(\theta-\phi)|}{1-\cos(\theta-\phi)}.
\end{equation}
L'H\^{o}pital's rule applied to the left hand of side \eqref{eqlh} implies that it is a bounded function of $\theta\in\mathbb{R}$ and $\phi\in\mathbb{R}$ (using the periodicity). So by the bounded convergence theorem (noting that $g\circ\psi^{-1}$ is also bounded) we have
$$I(g)=\frac{1}{4\pi^2}\int_{0}^{2\pi}(g\circ\psi^{-1})(e^{i\phi})\int_0^{2\pi}\frac{m(\theta)-m(\phi)-m^{\prime}(\phi)\sin(\theta-\phi)}{1-\cos(\theta-\phi)}d\theta d\phi,$$
which is the desired result.
\end{proof}
Let us remark that Theorem \ref{thm1} follows from Propositions \ref{propone} and \ref{proptwo}, and the change of variables $z=\psi^{-1}(e^{i\phi})$.

\section*{Acknowledgment}
We would like to thank the anonymous referee for many valuable comments and suggestions.
The research of T. Kennedy was supported in part
by NSF grant DMS-1500850.

\section{Appendix}
In this appendix, we prove the asymptotics for the potential kernel described in Lemma \ref{lempot}. We follow the methods introduced in Section 12 of \cite{Spi76} and \cite{FU96}. Let $\phi(\theta)$ be the characteristic function of the continuous-state random walk with $h=1$, i.e.,
$$\phi(\theta)=E e^{i X\cdot \theta}$$
where $\theta=(\theta_1,\theta_2)$ and $X=(X^{(1)},X^{(2)})$ is uniformly distributed in the disk of radius $1$.
\begin{lemma}\label{lema1}
\begin{eqnarray*}
\phi(\theta)=1-\frac{|\theta|^2}{8}+\frac{|\theta|^4}{192}+O(|\theta|^6), \theta\rightarrow 0\\
|\phi(\theta)|\leq\frac{4}{\pi}\min\{|\theta_1|^{-1},|\theta_2|^{-1}\}, \theta\rightarrow \infty.
\end{eqnarray*}
\end{lemma}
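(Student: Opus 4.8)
The plan is to evaluate $\phi$ essentially explicitly and then extract both asymptotics from that formula. By the rotational invariance of the uniform law on $B(0,1)$, $\phi(\theta)$ depends only on $|\theta|$; passing to polar coordinates and using $\int_0^{2\pi} e^{iz\cos\alpha}\,d\alpha=2\pi J_0(z)$ together with $\frac{d}{dx}\bigl(xJ_1(x)\bigr)=xJ_0(x)$ gives
\[
\phi(\theta)=\frac1\pi\int_{B(0,1)} e^{ix\cdot\theta}\,dx=2\int_0^1 J_0(r|\theta|)\,r\,dr=\frac{2J_1(|\theta|)}{|\theta|}.
\]
One could now simply quote the power series and the large-argument asymptotics of the Bessel function $J_1$, but it is just as quick, and more self-contained, to argue the two estimates directly, which is the route I would take.

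For the first asymptotic I would expand $e^{iX\cdot\theta}$ in powers of $X\cdot\theta$. Since $|X|\le 1$ almost surely the law of $X$ is compactly supported, so the series $\phi(\theta)=\sum_{k\ge 0}\frac{i^k}{k!}E[(X\cdot\theta)^k]$ converges and may be differentiated term by term; the odd-order terms vanish by the symmetry $X\mapsto -X$ (and the order-$5$ term vanishes as well), so
\[
\phi(\theta)=1-\tfrac12 E[(X\cdot\theta)^2]+\tfrac1{24}E[(X\cdot\theta)^4]+O(|\theta|^6).
\]
A direct computation in polar coordinates gives $E[(X^{(1)})^2]=E[(X^{(2)})^2]=\tfrac14$ and $E[X^{(1)}X^{(2)}]=0$, hence $E[(X\cdot\theta)^2]=|\theta|^2/4$; similarly $E[(X^{(1)})^4]=E[(X^{(2)})^4]=\tfrac18$ and $E[(X^{(1)})^2(X^{(2)})^2]=\tfrac1{24}$, so that $E[(X\cdot\theta)^4]=\tfrac18(\theta_1^4+\theta_2^4)+\tfrac14\theta_1^2\theta_2^2=|\theta|^4/8$. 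Substituting yields $\phi(\theta)=1-|\theta|^2/8+|\theta|^4/192+O(|\theta|^6)$.

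For the second estimate I would abandon rotational invariance and integrate one Cartesian coordinate at a time. Writing $B(0,1)=\{(x,y):|y|\le 1,\ |x|\le\sqrt{1-y^2}\}$,
\[
\phi(\theta)=\frac1\pi\int_{-1}^{1}\left(\int_{-\sqrt{1-y^2}}^{\sqrt{1-y^2}} e^{ix\theta_1}\,dx\right)e^{iy\theta_2}\,dy,
\]
and the inner integral equals $2\sin\!\bigl(\theta_1\sqrt{1-y^2}\bigr)/\theta_1$, of modulus at most $2/|\theta_1|$; hence $|\phi(\theta)|\le\frac1\pi\cdot\frac{2}{|\theta_1|}\cdot 2=\frac{4}{\pi|\theta_1|}$. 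Running the same argument with the two coordinates interchanged gives $|\phi(\theta)|\le\frac{4}{\pi|\theta_2|}$, and taking the minimum of the two bounds finishes the proof. I do not expect any genuine obstacle here: the only points requiring a little care are the bookkeeping of the second and fourth moments of the uniform law on the disk and the justification (immediate from $|X|\le 1$) that the Taylor remainder is uniformly $O(|\theta|^6)$.
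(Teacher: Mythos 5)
Your proof is correct and follows essentially the same two-pronged route the paper takes: Taylor expansion of the characteristic function with explicit moments of the uniform law on the disk for the small-$\theta$ asymptotic, and a Cartesian slice-by-slice bound (integrating the oscillatory factor in one coordinate to get a $2/|\theta_j|$ factor, then bounding the remaining integral by the length of the interval) for the large-$\theta$ decay. You fill in the moment bookkeeping that the paper compresses into ``by Taylor expansion,'' and you note the closed form $\phi(\theta)=2J_1(|\theta|)/|\theta|$ as an alternative, but the argument you actually carry out matches the paper's.
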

\begin{proof}
The first estimate in the lemma follows by Taylor expansion, while the second follows since
\begin{eqnarray*}
|\phi(\theta)|&=&|\frac{1}{\pi}\int_{-1}^{1}\int_{-\sqrt{1-x_1^2}}^{\sqrt{1-x_1^2}}\cos(x_1\theta_1)\cos(x_2\theta_2)dx_1dx_2|\leq\frac{4}{\pi|\theta_2|}
\end{eqnarray*}
and the symmetry of $\theta_1$ and $\theta_2$.
\end{proof}

The following lemma says our potential kernel is well-defined.
\begin{lemma}\label{lema2}
\begin{eqnarray*}
a(x)&=&\lim_{n\rightarrow\infty}\sum_{k=1}^n[p(k,0,0)-p(k,0,x)]\\
&=&\sum_{k=1}^2[p(k,0,0)-p(k,0,x)]+\frac{1}{(2\pi)^2}\int_{\mathbb{R}^2}\frac{1-e^{i\theta\cdot x}}{1-\phi(\theta)}\phi^3(\theta)d\theta
\end{eqnarray*}
\end{lemma}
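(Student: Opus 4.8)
The plan is to write each difference $p(k,0,0)-p(k,0,x)$ as a Fourier integral and then sum a geometric series in $\phi$. Since $p(k,0,\cdot)$ is the $k$-fold convolution of the uniform density on the unit disk, its characteristic function is $\phi(\theta)^k$. By the second estimate in Lemma~\ref{lema1}, $|\phi(\theta)|^3\le C\min\{|\theta_1|^{-3},|\theta_2|^{-3}\}$, and splitting $\mathbb{R}^2$ according to whether $|\theta_1|\le|\theta_2|$ shows this is integrable; hence $\phi^k\in L^1(\mathbb{R}^2)$ for every $k\ge 3$. For such $k$, $p(k,0,\cdot)$ is continuous (it is the convolution of the bounded, compactly supported function $p(2,0,\cdot)$ with a probability density), so Fourier inversion holds at every point, and since the step distribution is symmetric (so $\phi$ is real and even),
\[
p(k,0,0)-p(k,0,x)=\frac{1}{(2\pi)^2}\int_{\mathbb{R}^2}\bigl(1-e^{i\theta\cdot x}\bigr)\,\phi(\theta)^k\,d\theta,\qquad k\ge 3.
\]

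Next I would sum over $3\le k\le n$. For $\theta\ne 0$ one has $|\phi(\theta)|<1$ — equality would force $\cos(X\cdot\theta)=1$ a.s., impossible for $\theta\ne0$ — so $\sum_{k=3}^n\phi^k=\phi^3(1-\phi^{n-2})/(1-\phi)$ pointwise a.e., and interchanging the finite sum with the integral gives
\[
\sum_{k=3}^n\bigl[p(k,0,0)-p(k,0,x)\bigr]
=\frac{1}{(2\pi)^2}\int_{\mathbb{R}^2}\bigl(1-e^{i\theta\cdot x}\bigr)\,\frac{\phi(\theta)^3\bigl(1-\phi(\theta)^{n-2}\bigr)}{1-\phi(\theta)}\,d\theta.
\]
I would then let $n\to\infty$ by dominated convergence. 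The integrand converges pointwise (for $\theta\ne0$) to $(1-e^{i\theta\cdot x})\phi(\theta)^3/(1-\phi(\theta))$ and is dominated by $g(\theta):=2\,|1-e^{i\theta\cdot x}|\,|\phi(\theta)|^3/|1-\phi(\theta)|$. The function $g$ lies in $L^1(\mathbb{R}^2)$: near $\theta=0$ it is $O(|x|\,|\theta|^{-1})$, since $|1-e^{i\theta\cdot x}|\le|x|\,|\theta|$ while $1-\phi(\theta)=|\theta|^2/8+O(|\theta|^4)$ by Lemma~\ref{lema1}; on any compact set disjoint from the origin it is bounded because $1-\phi$ is continuous and positive there; and for $|\theta|$ large it is $O(|\phi(\theta)|^3)$, which is integrable, because $1-\phi(\theta)\to1$. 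Hence $\sum_{k=3}^{\infty}[p(k,0,0)-p(k,0,x)]$ equals $\frac{1}{(2\pi)^2}\int_{\mathbb{R}^2}(1-e^{i\theta\cdot x})\phi(\theta)^3/(1-\phi(\theta))\,d\theta$.

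Finally, the terms with $k=1,2$ are plainly finite ($p(1,0,\cdot)=\frac1\pi I_{B(0,1)}$ and $p(2,0,\cdot)$ is bounded), so the partial sums $\sum_{k=1}^n$ converge; this simultaneously shows $a(x)$ is well defined and yields the stated identity $a(x)=\sum_{k=1}^2[p(k,0,0)-p(k,0,x)]+\frac{1}{(2\pi)^2}\int_{\mathbb{R}^2}\frac{1-e^{i\theta\cdot x}}{1-\phi(\theta)}\phi(\theta)^3\,d\theta$. The one point requiring care is the integrability of $g$ near the origin, which relies on the first-order vanishing of $1-e^{i\theta\cdot x}$ cancelling the $|\theta|^{-2}$ singularity of $(1-\phi)^{-1}$; it is also the reason exactly the first two terms must be separated out, since the crude high-frequency bound of Lemma~\ref{lema1} only gives $\phi^k\in L^1$ for $k\ge3$, not for $k=2$.
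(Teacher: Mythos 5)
Your proof is correct and follows essentially the same route the paper takes, which is to invoke the continuous Fourier inversion formula and adapt Spitzer's argument for P1 (Section 12 of \cite{Spi76}), with the key integrability condition $\frac{1-e^{i\theta\cdot x}}{1-\phi(\theta)}\phi^3(\theta)\in L^1(\mathbb{R}^2)$ established via Lemma~\ref{lema1}. Your write-up simply spells out the details that the paper delegates to the Spitzer reference: the $\phi^k\in L^1$ bound for $k\ge 3$, the pointwise strict inequality $|\phi(\theta)|<1$ off the origin, the geometric-series summation, and the dominated-convergence estimate near the origin where the first-order vanishing of $1-e^{i\theta\cdot x}$ cancels the $|\theta|^{-2}$ singularity of $(1-\phi)^{-1}$.
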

\begin{proof}
By applying the continuous inversion formula, the proof is similar to the proof of P1 in Section 12 of \cite{Spi76} if one can show $\frac{1-e^{i\theta\cdot x}}{1-\phi(\theta)}\phi^3(\theta)\in L^1(\mathbb{R}^2)$. The latter is true because of Lemma \ref{lema1}.
\end{proof}

Let $Q(\theta)=E(X\cdot\theta)^2=\frac{|\theta|^2}{4}$ and $\psi(\theta)=1/(1-\phi(\theta))-2/Q(\theta)$. Then Lemma \ref{lema1} implies
\begin{equation}\label{eqa1}
\psi(\theta)=1/3+O(|\theta|^2)\mbox{ as } \theta\rightarrow0;
|\psi(\theta)|<2\mbox{ as } \theta\rightarrow\infty.
\end{equation}

Now we have all ingredients to prove Lemma \ref{lempot}.
\begin{proof}[Proof of Lemma \ref{lempot}]
By Lemma \ref{lema2} and the evenness of $\phi$, we see that
\begin{eqnarray}\label{eqa2}
a(x)&=&\sum_{k=1}^2[p(k,0,0)-p(k,0,x)]+\frac{2}{\pi^2}\int_{\mathbb{R}^2}\frac{1-\cos(x\cdot\theta)}{|\theta|^2}\phi^3(\theta)d\theta\nonumber\\
&&+\frac{1}{(2\pi)^2}\int_{\mathbb{R}^2}(1-e^{i\theta\cdot x})\psi(\theta)\phi^3(\theta)d\theta.
\end{eqnarray}
By the estimates in Lemma \ref{lema1} and \eqref{eqa1}, and the Riemann-Lebesgue lemma,
$$\frac{1}{(2\pi)^2}\int_{\mathbb{R}^2}(1-e^{i\theta\cdot x})\psi(\theta)\phi^3(\theta)d\theta\rightarrow\frac{1}{(2\pi)^2}\int_{\mathbb{R}^2}\psi(\theta)\phi^3(\theta)d\theta \mbox{ as }|x|\rightarrow\infty$$
which is a constant contributing to $C_0$ in the Lemma.

This gives the first $o(1)$ term
\begin{equation}\label{eqa3}
-\frac{1}{(2\pi)^2}\int_{\mathbb{R}^2}e^{i\theta\cdot x}\psi(\theta)\phi^3(\theta)d\theta=-\frac{1}{(2\pi)^2}\int_{\mathbb{R}^2}\cos(x\cdot \theta)\psi(\theta)\phi^3(\theta)d\theta.
\end{equation}

Let $B:=B(0,\pi):=\{z:|z|<\pi\}$ and $B^c=\mathbb{R}^2\setminus B$. Then the first integral together with the attached multiplicative term in \eqref{eqa2} can be written as the sum of the following two integrals
\begin{eqnarray}
I_1(x):=\frac{2}{\pi^2}\int_B\frac{1-\cos(x\cdot\theta)}{|\theta|^2}\phi^3(\theta)d\theta\label{eqa4}\\
I_2(x):=\frac{2}{\pi^2}\int_{B^c}\frac{1-\cos(x\cdot\theta)}{|\theta|^2}\phi^3(\theta)d\theta.\label{eqa5}
\end{eqnarray}
By the estimate in Lemma \ref{lema1} and the Riemann-Lebesgue lemma we have
$$I_2(x)\rightarrow\frac{2}{\pi^2}\int_{B^c}\frac{\phi^3(\theta)}{|\theta|^2} \mbox{ as } x\rightarrow\infty,$$
which leaves the second $o(1)$ term
\begin{equation}\label{eqa6}
-\frac{2}{\pi^2}\int_{B^c}\frac{\cos(x\cdot\theta)}{|\theta^2|}\phi^3(\theta)d\theta.
\end{equation}
We rewrite $I_1(x)$ as follows
\begin{equation}\label{eqa61}
I_1(x)=\frac{2}{\pi^2}\int_B\frac{1-\cos(x\cdot\theta)}{|\theta|^2}d\theta+\frac{2}{\pi^2}\int_B\frac{\phi^3(\theta)-1}{|\theta|^2}d\theta+\frac{2}{\pi^2}\int_B\frac{\cos(x\cdot\theta)}{|\theta|^2}(1-\phi^3(\theta))d\theta.
\end{equation}
By Lemma \ref{lema1}, the second integral in \eqref{eqa61} is a constant contributing to $C_0$ in the lemma, and by the Riemann-Lebesgue lemma the last integral in \eqref{eqa61} gives the third $o(1)$ term
\begin{equation}\label{eqa7}
\frac{2}{\pi^2}\int_B\frac{\cos(x\cdot\theta)}{|\theta|^2}(1-\phi^3(\theta))d\theta,
\end{equation}
and the first integral together with the attached multiplicative term in \eqref{eqa61} is equal to (using the proof of P3 in Section 12 of \cite{Spi76})
\begin{eqnarray}
\frac{8}{\pi^2}\int_0^{\pi/2}[\gamma+\ln\pi+\ln|x|+\ln(\sin\alpha)+\int_{\pi|x|\sin\alpha}^{\infty}\frac{\cos u}{u}du]d\alpha\label{eqa8}
\end{eqnarray}
where $\gamma$ is the Euler's constant.

It is clear that $\gamma+\ln\pi+\ln|x|+\ln(\sin\alpha)$ in \eqref{eqa8} as a function of $\alpha$ is integrable from $0$ to $\pi/2$, so the fourth $o(1)$ term is
\begin{equation}\label{eqa9}
\frac{8}{\pi^2}\int_0^{\pi/2}\int_{\pi|x|\sin\alpha}^{\infty}\frac{\cos u}{u}dud\alpha=\frac{2}{\pi^2}\int_{B^c}\frac{\cos(x\cdot\theta)}{|\theta|^2}d\theta.
\end{equation}
where the equality follows by reversing the procedure which led to \eqref{eqa8}.

Adding the four $o(1)$ terms, i.e., \eqref{eqa3}+\eqref{eqa6}+\eqref{eqa7}+\eqref{eqa9}, we get
\begin{eqnarray}
&&-\frac{1}{(2\pi)^2}\int_{\mathbb{R}^2}\cos(x\cdot \theta)\psi(\theta)\phi^3(\theta)d\theta+\frac{2}{\pi^2}\int_{\mathbb{R}^2}\frac{\cos(x\cdot\theta)}{|\theta|^2}(1-\phi^3(\theta))d\theta\nonumber\\
&=&\frac{1}{4\pi^2}\int_{\mathbb{R}^2}\cos(x\cdot \theta)[\frac{8}{|\theta|^2}-\frac{\phi^3(\theta)}{1-\phi(\theta)}]d\theta.\label{eqa10}
\end{eqnarray}
Noting that $\cos(x\cdot\theta)=\nabla\cdot \mathbf{b}(\theta)$ where $\mathbf{b}(\theta)=\sin(x\cdot\theta)(x_1/|x|^2,x_2/|x|^2)$, the divergence theorem gives
\begin{eqnarray}
&&\int_{\mathbb{R}^2}\cos(x\cdot \theta)[\frac{8}{|\theta|^2}-\frac{\phi^3(\theta)}{1-\phi(\theta)}]d\theta\nonumber\\
&=&\lim_{N\rightarrow\infty}\int_{B(0,N)}[\frac{8}{|\theta|^2}-\frac{\phi^3(\theta)}{1-\phi(\theta)}]\nabla\cdot \mathbf{b}(\theta)d\theta\nonumber\\
&=&\lim_{N\rightarrow\infty}|x|^{-1}\left(-\int_{B(0,N)}(\frac{x_1}{|x|},\frac{x_2}{|x|})\cdot\nabla[\frac{8}{|\theta|^2}-\frac{\phi^3(\theta)}{1-\phi(\theta)}]\sin(x\cdot\theta)d\theta\right).\label{eqa11}
\end{eqnarray}
We can apply the divergence theorem again to \eqref{eqa11}.
As a result, we see that \eqref{eqa10} has order $O(|x|^{-2})$.

Therefore, the proof of Lemma \ref{lempot} is complete if one can show $\Delta_{h} a(x/h)=\frac{1}{\pi}I_{B(0,h)}(x)$. But the latter is easy to verify (note that $a(x/h)=h^2a_h(x)$).

\end{proof}

\end{document}